\newtheorem{thm}{Theorem}
\newtheorem{prop}[thm]{Proposition}
\newtheorem{lem}[thm]{Lemma}
\theoremstyle{remark}
\numberwithin{equation}{section}
\newcommand{\R}{\mathbb{R}}
\newcommand{\flag}{\mathrm{Flag(\mathbb{R}^n)}}
\newcommand{\psl}{\mathrm{PSL}_n(\mathbb{R})}
\newcommand{\SL}{\mathrm{SL}_n(\mathbb{R})}
\newcommand{\twis}{ \mathcal{Z}_{\mathrm{tw}}(\lambda;{\mathbb{R}}^{n-1})}
\newcommand{\Hit}{\mathrm{Hit}}
\begin{document}
\title[Shearing Deformations and the symplectic structure]{Shearing deformations of Hitchin representations  and the Atiyah-Bott-Goldman symplectic form}

\author[H. Zeybek]{Hat\.{\i}ce Zeybek}
\address{Hacettepe University, Department of Mathematics,
06800 Ankara, Turkey}
\thanks{This research was supported by the Scientific and Technological
Research Council of Turkey, T\"{U}B\.{I}TAK (B\.{I}DEB-2214/A)}
 \email{haticezeybek@hacettepe.edu.tr}

\begin{abstract}
The Hitchin component  $\Hit_n(S)$ of a closed surface $S$ is a preferred component of the character variety $\mathcal{X}_{\mathrm{PSL}_n(\mathbb{R})}(S)$ consisting of homomorphisms from the fundamental group $\pi_1(S)$ to the Lie group $\psl$, whose elements enjoy remarkable geometric and dynamical properties. We consider a certain type of deformations of the elements of $\Hit_n(S)$, called shearing deformations, and compute their pairing for the Atiyah-Bott-Goldman symplectic form of the character variety.
\end{abstract}

\maketitle

For a closed, connected, oriented surface $S$ of genus at least 2, the Hitchin component $\Hit_n(S)$ is a preferred component of the character variety 
\[\mathcal{X}_{\mathrm{PSL}_n(\mathbb{R})}(S)=\{\mathrm{homomorphisms} \,\, \rho : \pi_1(S)\rightarrow \mathrm{PSL}_n(\mathbb{R})\} /\kern-3pt/ \mathrm{PSL}_n(\mathbb{R}) \]
 consisting of all group homomorphisms $\pi_1(S)\rightarrow \mathrm{PSL}_n(\mathbb{R})$ from the fundamental group $\pi_1(S)$ to the Lie group $\mathrm{PSL}_n(\mathbb{R})$, considered up to conjugation by elements of  $\mathrm{PSL}_n(\mathbb{R})$; see \S \ref{sect: Hitchin} for a precise definition. When $n=2$, the Hitchin component $\Hit_2(S)$ is just the {Teichm\"uller component} of $\mathcal{X}_{\mathrm{PSL}_2(\mathbb{R})}(S)$, consisting of the monodromies of hyperbolic metrics on the oriented surface $S$, and consequently plays an important role in 2-dimensional hyperbolic geometry and in complex analysis.  In the general case the Hitchin characters, namely the elements of the Hitchin component $\Hit_n(S)$, similarly enjoy many powerful geometric properties  from a variety of viewpoints, involving differential geometry, dynamical systems or the theory of Lie groups; see for instance the fundamental results of  Hitchin \cite{Hi}, Labourie \cite{Lab} and Fock-Goncharov \cite{FoG}. In particular, using the point of view of Higgs bundles, Hitchin proved in \cite{Hi} that $\Hit_n(S)$ is diffeomorphic to the Euclidean space $\R^{2(g-1)(n^2-1)}$.
 
 In this article, we consider the symplectic properties of a certain type of deformations of Hitchin characters called shearing deformations.  More precisely, the  character variety  $\mathcal{X}_{\mathrm{PSL}_n(\mathbb{R})}(S)$ comes with a natural symplectic form 
\[\omega : T_\rho\mathcal{X}_{\mathrm{PSL}_n(\mathbb{R})}(S) \times T_\rho\mathcal{X}_{\mathrm{PSL}_n(\mathbb{R})}(S)\rightarrow \mathbb{R},\]
the Atiyah-Bott-Goldman symplectic form \cite{AB, Go}; see also \S \ref{sect: ABG}. The main purpose of the article is to compute the pairing of two infinitesimal shearing deformations under this symplectic form.

Shearing deformations are the natural higher dimensional generalization of Thurston's earthquakes \cite{Th2} and shearing deformations \cite{Th3, Bon2} for the Teichm\"uller space $\Hit_2(S)$. A shearing deformation of a Hitchin character $\rho \in \Hit_n(S)$ is determined by two pieces of data: a geodesic lamination $\lambda$; and a twisted $\R^{n-1}$-valued transverse cocycle $\alpha \in \twis$ assigning an element of $\R^{n-1}$ to each oriented arc transverse to $\lambda$. See \cite{Dr} or \cite{BonDr2} for a precise construction, and  \S\ref{sect:ShearingDef} for the definitions that will be needed here. 

Shearing deformations also occur in the Bonahon-Dreyer parametrization \cite{BonDr1,  BonDr2}  of $\Hit_n(S)$ associated to a maximal geodesic lamination $\lambda$. This parametrization is based on the consideration of two types of invariants for a Hitchin character $\rho \in \Hit_n(S)$. The first invariants are the so-called triangle invariants $\tau_{abc}^\rho(s)$ associated to the spikes $s$ of the complement $S- \lambda$. The remaining invariants  are provided by a certain shearing relative cycle $\sigma_\rho \in \mathcal Z_{\mathrm{tw}}(\lambda, \mathrm{slits};  \R^{n-1})$, valued in an affine space whose corresponding vector space is the space  $\twis$ of twisted $\R^{n-1}$-valued transverse cocycles. In this framework, the shearing deformations of $\rho \in \Hit_n(S)$ along a maximal geodesic lamination $\lambda$ are precisely the deformations that leave the triangle invariants constant. We are therefore computing the restriction of the Atiyah-Bott-Goldman symplectic form $\omega$ to the leaves of a certain foliation of $\Hit_n(S)$. The leaves of this foliation have dimension $6(g-1)(n-1)+\lfloor \frac{1}{2}(n-1)\rfloor$, where $\lfloor x \rfloor$ denotes the largest integer that is less than or equal to $x$; this should be compared with the dimension $2(g-1)(n^2-1)$ of $\Hit_n(S)$.

Our computation can be expressed in two different ways, one that is more conceptual and one that is more practical. The best way to understand a twisted $\R^{n-1}$-valued transverse cocycle $\alpha \in \twis$ is to consider a train track $\Phi$ carrying the geodesic lamination $\lambda$. The geodesic lamination $\lambda$ has a unique orientation cover $\widehat \lambda \to \lambda$ where the leaves are oriented, and this cover uniquely extends to a two-fold cover $\widehat \Phi \to \Phi$. Then a twisted $\R^{n-1}$-valued transverse cocycle $\alpha \in\twis$ defines a homology class $[\alpha] \in H_1(\widehat\Phi; \R^{n-1})$ and a weight $\alpha(e) \in \R^{n-1}$ associated to each edge $e$ of the train track $\widehat \Phi$. In addition, $\alpha \in\twis$ is completely determined by the homology class $[\alpha] \in H_1(\widehat\Phi; \R^{n-1})$, or by the family of the edge weights  $\alpha(e) \in \R^{n-1}$, and the homology classes or edge weight systems that are thus associated to transverse cocycles are easily characterized. See \S \ref{sect: twisted cocycles} and \S \ref{sect: transverse cocycles and homology} for details. 

For $a=1$, $2$, \dots, $n-1$, let $[\alpha^{(a)}]  \in H_1(\widehat\Phi; \R)$ denote the $a$th component of  $[\alpha] \in H_1(\widehat\Phi; \R^{n-1})$, and let $\alpha^{(a)} (e)\in \R$ be the $a$th coordinate of the edge weight $\alpha(e) \in \R^{n-1}$.

The first formulation of our main result is homological.

\begin{thm}
\label{thm:MainThm1}
Let S be a closed oriented surface with genus $g\geq 2$ and let $\lambda$ be a maximal geodesic lamination carried by a train track $\Phi$ in $S$.  If the  vectors $U_{\alpha_1}$, $U_{\alpha_2}\in T_\rho \Hit_n(S)$ are tangent to the shearing deformations of $\rho \in \Hit_n(S)$ along $\lambda$ respectively associated to the transverse cocycles
$\alpha_1$, $\alpha_2 \in \mathcal Z(\lambda; \widetilde \R^{n-1})$, then for the Atiyah-Bott-Goldman symplectic form $\omega$, 
$$
\omega(U_{\alpha_1},U_{\alpha_2}) = \sum_{a,b=1}^{n-1}  C(a,b) [\alpha_1^{(a)}] \cdot [\alpha_2^{(b)}]
$$
where $[\alpha_1^{(a)}] \cdot [\alpha_2^{(b)}] \in \R$ denotes the algebraic intersection number of the homology classes $[\alpha_1^{(a)}]$, $[\alpha_2^{(a)}]  \in H_1(\widehat\Phi; \R)$ in the oriented surface $\widehat\Phi$, and where
$$
C(a,b)=
\begin{cases}
  2a(n-b)& \mathrm{ if } \, a\leq b\\
  2b(n-a)& \mathrm{ if } \, a\geq b .
\end{cases}
$$
\end{thm}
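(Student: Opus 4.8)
The plan is to evaluate $\omega(U_{\alpha_1},U_{\alpha_2})$ by representing both shearing tangent vectors by explicit group cocycles adapted to the train track $\Phi$, and then to reduce the Atiyah--Bott--Goldman cup product to an algebraic intersection pairing on the orientation cover $\widehat\Phi$; this is the higher-rank analogue of the Wolpert--Thurston and S\"ozen--Bonahon computation of the Weil--Petersson/Thurston symplectic form in terms of transverse cocycles for $\Hit_2(S)$, and of Goldman's twist-flow formula \cite{Go}. First I would use the slithering maps of Bonahon--Dreyer recalled in \S\ref{sect:ShearingDef} to represent each $U_{\alpha_i}\in T_\rho\Hit_n(S)=H^1(\pi_1(S);\ad)$ by a cocycle $u_{\alpha_i}\in Z^1(\pi_1(S);\ad)$ computed from $\Phi$: the value of $u_{\alpha_i}$ on the homotopy class of a loop transverse to $\lambda$ is the sum, over the edges $e$ of $\Phi$ crossed by the loop, of an $\mathrm{Ad}_\rho$--conjugate of the element of the Cartan subalgebra of $\mathfrak{sl}_n(\R)$ associated to $\alpha_i(e)\in\R^{n-1}$ via the fundamental-coweight basis $H_1,\dots,H_{n-1}$, where $H_a$ is the diagonal traceless matrix with first $a$ entries equal to $1-\frac{a}{n}$ and the remaining entries equal to $-\frac{a}{n}$. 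The hypothesis that the $U_{\alpha_i}$ are \emph{shearing} deformations --- equivalently, that the triangle invariants stay constant --- is exactly what guarantees that this cocycle is ``carried by $\Phi$'' with values in the Cartan directions and receives no contribution from the ideal triangles of $S-\lambda$.

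Next I would pass to the orientation cover. The $\R^{n-1}$--coefficient transported along a leaf of $\lambda$ changes, under reversal of the transverse orientation, by the action of the longest element $w_0$ of the Weyl group on the Cartan subalgebra $\R^{n-1}$; this permutation/sign ambiguity is precisely the monodromy defining the double cover $\widehat\Phi\to\Phi$, so by \S\ref{sect: twisted cocycles} and \S\ref{sect: transverse cocycles and homology} the cocycle $u_{\alpha_i}$ is governed by the homology class $[\alpha_i]\in H_1(\widehat\Phi;\R^{n-1})$, and its $a$--th component by $[\alpha_i^{(a)}]\in H_1(\widehat\Phi;\R)$. I would then substitute the representatives $u_{\alpha_1},u_{\alpha_2}$ into the description of $\omega$ from \S\ref{sect: ABG} as the cup product $H^1(\pi_1(S);\ad)\otimes H^1(\pi_1(S);\ad)\to H^2(\pi_1(S);\R)=\R$ paired by the Killing form $B$ of $\mathfrak{sl}_n(\R)$. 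Since both representatives are carried by $\Phi$, the cup product localizes at the switches of $\Phi$; after lifting to the oriented surface $\widehat\Phi$, these switch contributions assemble --- by the standard train-track expression of the intersection pairing in terms of edge weights --- into the algebraic intersection numbers $[\alpha_1^{(a)}]\cdot[\alpha_2^{(b)}]$ in $\widehat\Phi$, each weighted by $B$ evaluated on the corresponding pair of Cartan directions. This produces $\omega(U_{\alpha_1},U_{\alpha_2})=\sum_{a,b=1}^{n-1}B(H_a,H_b)\,[\alpha_1^{(a)}]\cdot[\alpha_2^{(b)}]$.

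It then remains to identify $C(a,b)=B(H_a,H_b)$. Normalizing $B(X,Y)=2n\,\mathrm{tr}(XY)$ on $\mathfrak{sl}_n(\R)$, a direct computation gives, for $a\le b$,
\[
\mathrm{tr}(H_aH_b)=a\left(1-\frac{b}{n}\right)^{2}+(n-b)\,\frac{ab}{n^{2}}=\frac{a(n-b)}{n},
\]
so that $C(a,b)=2a(n-b)$ when $a\le b$ and, by the symmetry $C(a,b)=C(b,a)$, $C(a,b)=2b(n-a)$ when $a\ge b$, which is exactly the claimed matrix. (Antisymmetry of $\omega$ is consistent with this: the intersection pairing on $\widehat\Phi$ is antisymmetric while $C(a,b)$ is symmetric.)

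The hard part will be the localization in the first two steps: making rigorous the statement that the shearing tangent vector is represented by a cocycle carried by $\Phi$, and tracking all orientation conventions through the slithering maps and the $w_0$--twist so that, in the cup-product step, the cross terms at the switches of $\Phi$ combine exactly into the intersection form of $\widehat\Phi$ with the stated coefficients, rather than into spurious boundary terms at the spikes of $S-\lambda$. Producing a convenient cell decomposition of $S$ (or of a train-track neighbourhood) adapted simultaneously to $\Phi$ and to $\mathrm{Ad}_\rho$, and checking the vanishing of the ideal-triangle contributions under the constant-triangle-invariant hypothesis, is where the real work lies; once the reduction to $\widehat\Phi$ is established, the evaluation of $C(a,b)$ is a routine Lie-theoretic calculation.
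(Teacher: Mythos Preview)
Your overall architecture is right --- localize the cup product at the switches of a train track, identify the Cartan contribution $B(H_a,H_b)=C(a,b)$, and package the switch sums as the intersection form on $\widehat\Phi$ --- and your Killing-form computation is exactly the paper's Lemma~\ref{KillingForm}. But there is a genuine gap in the first step, and it is not just bookkeeping.

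You assert that the cocycle $u_{\alpha_i}$ representing the infinitesimal shearing is, on an arc $k$ transverse to $\lambda$, a finite sum over the edges of $\Phi$ crossed by $k$ of $\mathrm{Ad}_\rho$--conjugates of a single Cartan element. This is false for any fixed train track. The Gap Formula (Lemma~\ref{gap formula}) shows that $u_\alpha(k)$ is an \emph{infinite} sum over the components $d$ of $k-\widetilde\lambda$, and each term involves the infinitesimal shearing maps $t_{g_d^\pm}^{(a)}$ along the two leaves $g_d^\pm$ of $\widetilde\lambda$ bounding $d$. These maps lie in \emph{different} Cartan subalgebras, one for each pair of flags $\mathcal F_\rho(x^\pm)$ at the endpoints of $g_d^\pm$; the differences $t_{g_d^-}^{(a)}-t_{g_d^+}^{(a)}$ are small (controlled by the H\"older continuity of the flag curve and the exponential decay of gap lengths) but nonzero, and there are infinitely many of them. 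So the ``cocycle carried by $\Phi$ with Cartan values'' picture is only an approximation, and on a fixed $\Phi$ the error does not vanish.

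The paper handles this by a limiting argument you do not mention: one constructs a nested family $\Phi=\Phi_0\supset\Phi_1\supset\cdots\supset\Phi_R\supset\cdots$ of thinner and thinner train tracks carrying $\lambda$ (by ``unzipping'' components of $e-\lambda$ of depth $\le R$). On $\widehat\Phi_R$, Lemma~\ref{gapapp} gives $u_\alpha(k)=\sum_a\alpha^{(a)}(k)\,t_g^{(a)}+O(e^{-AR})$, Lemma~\ref{KillingForm} gives $B(t_{g_1}^{(a)},t_{g_2}^{(b)})=C(a,b)+O(e^{-AR})$ for leaves $g_1,g_2$ crossing the same edge, and Lemma~\ref{weightoftransversecocycle} bounds the edge weights by $O(R)$. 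Both the switch formula for $\omega(U_{\alpha_1},U_{\alpha_2})$ (Proposition~\ref{ABGform}) and the switch formula for $\sum_{a,b}C(a,b)\,[\alpha_1^{(a)}]\cdot[\alpha_2^{(b)}]$ (Lemma~\ref{lem:ComputeIntersectionEdgeWeightSystems}) are valid on every $\widehat\Phi_R$ and give quantities independent of $R$; comparing them on $\widehat\Phi_R$ produces a discrepancy of order $O(R^2e^{-AR})$, and letting $R\to\infty$ yields the theorem. Without this refinement-and-limit step, the ``spurious'' non-Cartan terms in the Gap Formula do not cancel and your reduction to the intersection form on $\widehat\Phi$ does not go through.
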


Theorem~\ref{thm:MainThm1} is proved as Theorem~\ref{maintheorem1} in \S \ref{sect:Proof main thms}. 
A simple computation (see Lemma~\ref{lem:ComputeIntersectionEdgeWeightSystems}) rephrases this result with a more explicit formula, in terms of edge weights. 

\begin{thm}
\label{thm:MainThm2}
Under the hypotheses of Theorem~{\upshape\ref{thm:MainThm1}}, $\omega(U_{\alpha_1},U_{\alpha_2})$ is equal to 
$$
{ \frac{1}{2}}  \sum_{a, b=1}^{n-1} \sum_{s} C(a,b)  \bigl( \alpha_1^{(a)}(e_s^{\mathrm{right}}) \,\alpha_2^{(b)}(e_s^{\mathrm{left}}) -\alpha_1^{(a)} (e_s^{\mathrm{left}})\, \alpha_2^{(b)} (e_s^{\mathrm{right}})\bigr)
$$
where the third sum is over all switches of the train track $\widehat{\Phi}$ and where, at each such switch $s$, $e_s^{\mathrm{right}}$ and $e_s^{\mathrm{left}}$ denote the two edges of $\widehat\Phi$ outgoing from $s$ on the right and on the left, respectively.
\end{thm}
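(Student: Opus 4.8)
The plan is to obtain Theorem~\ref{thm:MainThm2} from Theorem~\ref{thm:MainThm1} by feeding into the latter a purely combinatorial formula for the algebraic intersection number of two homology classes of $\widehat\Phi$ that are presented by edge weight systems; this is the content of Lemma~\ref{lem:ComputeIntersectionEdgeWeightSystems}. Granting that lemma, the deduction is immediate: for each pair $(a,b)$ one substitutes into the formula of Theorem~\ref{thm:MainThm1} the value of $[\alpha_1^{(a)}]\cdot[\alpha_2^{(b)}]$ provided by the lemma, interchanges the orders of summation, and recognizes the resulting triple sum over $a$, $b$ and the switches of $\widehat\Phi$ as exactly the displayed expression, the overall factor $\tfrac12$ being the one carried by the combinatorial formula. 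So essentially all of the work is in establishing the lemma, namely the identity $[w_1]\cdot[w_2]=\tfrac12\sum_s\bigl(w_1(e_s^{\mathrm{right}})\,w_2(e_s^{\mathrm{left}})-w_1(e_s^{\mathrm{left}})\,w_2(e_s^{\mathrm{right}})\bigr)$ for two real edge weight systems $w_1$, $w_2$ on $\widehat\Phi$ satisfying the switch relations, the sum being over the switches $s$ of $\widehat\Phi$ and $e_s^{\mathrm{left}}$, $e_s^{\mathrm{right}}$ being as in the statement.

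To prove that identity I would argue as follows. Thicken $\widehat\Phi$ to a compact oriented surface with boundary that deformation retracts onto it, so that $H_1(\widehat\Phi;\R)$ is identified with the space of real edge weight systems satisfying the switch relation $w(e_s^{\mathrm{in}})=w(e_s^{\mathrm{left}})+w(e_s^{\mathrm{right}})$ at every switch $s$ (here $e_s^{\mathrm{in}}$ is the remaining branch at $s$), a class being represented by the $1$-cycle $\sum_e w(e)\,e$ with each edge $e$ carrying the orientation coming from $\widehat\lambda$. By bilinearity of the intersection pairing it suffices to treat the case where $w_1$ and $w_2$ take nonnegative integer values, so that each class is represented by a multicurve carried by $\widehat\Phi$ consisting of $w_i(e)$ parallel strands in the band over $e$. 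Put the two multicurves in general position: inside each band the strands run parallel and contribute nothing, so after an isotopy all intersection points lie in small disks around the switches, and at each switch the signed count of crossings is read off from the way the $w_i(e_s^{\mathrm{in}})$ incoming strands split into $w_i(e_s^{\mathrm{left}})$ strands turning left and $w_i(e_s^{\mathrm{right}})$ strands turning right. Inspecting this local model, using the orientation of $\widehat\Phi$ together with the strand orientations, one finds that this signed count equals $w_1(e_s^{\mathrm{right}})\,w_2(e_s^{\mathrm{left}})$ when the strands of the first multicurve are routed to the left of those of the second in the incoming band, and $-w_1(e_s^{\mathrm{left}})\,w_2(e_s^{\mathrm{right}})$ when they are routed to the right; the two routing conventions give the same total over all switches but distribute the intersection points differently, and averaging them produces $\tfrac12\bigl(w_1(e_s^{\mathrm{right}})\,w_2(e_s^{\mathrm{left}})-w_1(e_s^{\mathrm{left}})\,w_2(e_s^{\mathrm{right}})\bigr)$ at each switch. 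Summing over the switches of $\widehat\Phi$ gives $[w_1]\cdot[w_2]$, and specializing $(w_1,w_2)=(\alpha_1^{(a)},\alpha_2^{(b)})$ finishes the argument.

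The main obstacle is the bookkeeping of signs and orientations in this local model at a switch: one must consistently relate the labels ``left'' and ``right'' to the orientation of $\widehat\Phi$ and to the orientations that $\widehat\lambda$ induces on the edges, check that the local crossing signs come out as claimed at both types of switch of $\widehat\Phi$ (incoming large branch versus incoming pair of small branches), and verify that a routing chosen in one band is compatible with the routings in the two adjacent switch disks, so that no spurious crossings appear along edges. Once these conventions are pinned down the computation is routine, and apart from Theorem~\ref{thm:MainThm1} nothing beyond this local analysis, bilinearity, and the surface–graph deformation retraction is used; in particular the passage from the homological statement to the edge-weight statement introduces no new analytic input.
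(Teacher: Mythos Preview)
Your overall strategy is exactly the paper's: Theorem~\ref{thm:MainThm2} is deduced from Theorem~\ref{thm:MainThm1} by plugging in the combinatorial intersection formula of Lemma~\ref{lem:ComputeIntersectionEdgeWeightSystems}, and all the content lies in that lemma.

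Where you diverge is in the proof of the lemma itself. The paper does not pass through multicurves. Instead it takes the trivalent spine graph $\Gamma\subset\widehat\Phi$ (the one obtained by collapsing ties), realizes $[a_1]$ as a weighted sum of oriented edges of $\Gamma$, and realizes $[a_2]$ on a parallel copy $\Gamma'$ obtained by pushing $\Gamma$ in the direction of the tie orientation. Then $\Gamma\cap\Gamma'$ consists of exactly one point per switch, and one reads off directly that $[a_1]\cdot[a_2]=\sum_{s\text{ left-diverging}} a_1(e_s^{\mathrm{right}})a_2(e_s^{\mathrm{left}})-\sum_{s\text{ right-diverging}} a_1(e_s^{\mathrm{left}})a_2(e_s^{\mathrm{right}})$; the symmetric formula with the $\tfrac12$ is then obtained by writing $[a_1]\cdot[a_2]=\tfrac12([a_1]\cdot[a_2]-[a_2]\cdot[a_1])$. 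This bypasses two of the issues you flag yourself: there is no reduction to nonnegative integer weights (which is not entirely innocent, since the cone of nonnegative weight systems need not obviously span $\mathcal W(\widehat\Phi;\R)$), and there is no routing-convention bookkeeping for parallel strands, since each switch contributes a single transverse intersection point with a sign determined solely by whether the outgoing pair lies to the left or right of the oriented tie. Your multicurve picture can certainly be made to work, but the spine-and-pushoff argument is shorter and sidesteps precisely the sign and compatibility checks you identify as the main obstacle.
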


Theorems~\ref{thm:MainThm1} and \ref{thm:MainThm2} are the natural extensions of an earlier computation of $\omega$ by S\"{o}zen-Bonahon \cite{SozBon} for the case $n=2$. In that case, every vector $u\in T_\rho \Hit_2(S)$ is tangent to a shearing deformation along $\lambda$, so that their result provides a full computation of the Atiyah-Bott-Goldman symplectic form $\omega$ for the  Teichm\"uller space $\Hit_2(S)$, in terms of the Thurston parametrization of $\Hit_2(S)$ by the shear coordinates associated to $\lambda$ \cite{Th3, Bon1}.

\subsection*{Acknowledgements}
This work was carried out while the author was visiting the University of Southern California,  with funding provided by the  Scientific and Technological
Research Council of Turkey (T\"{U}B\.{I}TAK). She is grateful to Professor Francis Bonahon for his endless support, and  to her supervisor Professor Ya\c{s}ar S\"{o}zen for giving her this opportunity. Finally, she owes a debt of gratitude to her husband Halil Zeybek and her daughter Huzur Zeybek, who always stood by her in this process.

\section{Geodesic laminations and Transverse Cocycles}
We recall a few definitions on geodesic laminations and transverse cocycles. For more detail, see \cite{Bon1, Bon2}.

\subsection{Geodesic laminations}
\label{sect: geod laminations}
Throughout the article, we fix the choice of a Riemannian metric $m$ with negative curvature on the surface $S$. A \emph{geodesic lamination} is a closed subset $\lambda\subset S$ that can be decomposed as a disjoint union of simple complete $m-$geodesics, called its \emph{leaves}. Recall that a geodesic is \emph{complete} if it cannot be extended to a longer  geodesic and it is \emph{simple} if it has no transverse self-intersection point.  The leaves of a geodesic lamination can be closed or bi-infinite. A geodesic lamination can have, either finitely many leaves, or uncountably many leaves. For basic facts about geodesic laminations, we refer \cite{Th1, CEG, PeH}.

In particular, the definition of a geodesic lamination can be made independent of our choice of a negatively curved metric $m$, and our constructions will be independent of this choice. 

A geodesic lamination $\lambda$ is \emph{maximal}, if it is not contained in any larger geodesic lamination. This is equivalent to the property that the complement $S- \lambda$ consists of finitely many disjoint ideal triangles, namely triangles isometric to an infinite triangle in hyperbolic space $\mathbb{H}^2$ whose three vertices are on the circle at infinity.

\subsection{$\mathbb{R}$-valued transverse cocycles}
\label{sect: R-valued transverse cocycles}
An \emph{$\mathbb{R}$-valued transverse cocycle} $\alpha$ for the geodesic lamination $\lambda$ can be thought as a transverse finitely additive signed measure for $\lambda$. More precisely, $\alpha$ assigns a number $\alpha(k) \in \mathbb{R}$ to each arc $k \subset S$ which is transverse to (the leaves of) $\lambda$ and which satisfies the following two conditions:
\begin{enumerate}
\item $\alpha$ is finitely additive, namely $\alpha(k)=\alpha(k_1)+\alpha(k_2)$ whenever the arc $k$ transverse to $\lambda$ is decomposed into two arcs $k_1$, $k_2$ with disjoint interiors;
\item $\alpha$ is invariant under homotopy respecting $\lambda$, in the sense that  $\alpha(k)=\alpha(k')$ whenever the arcs $k$ and $k'$ are homotopic through a family of arcs which are all transverse to $\lambda$.
\end{enumerate}
By convention, the endpoints of an arc transverse to $\lambda$ are assumed to be in the complement $S- \lambda$.

The $\mathbb{R}$-valued transverse cocycles for the geodesic lamination $\lambda$ form a vector space  $\mathcal{Z}(\lambda;\mathbb{R})$.

A (trivalent) \emph{train track} $\Phi$ in the surface $S$ is the union of finitely many ``long'' rectangles $e_i$ which are foliated by arcs parallel to the ``short'' sides. These long rectangles meet only along arcs (possibly reduced to a point) contained in their short sides and satisfy the following properties:
\begin{enumerate}
 \item Each short side of a rectangle $e_i$ is either contained in a short side of a rectangle $e_j$, or is the union of two short sides of rectangles $e_j$ and $e_k$ meeting in one point (where $e_i$, $e_j$, and $e_k$ are not necessarily distinct); a point where three such rectangles $e_i$, $e_j$, $e_k$ meet then forms a ``spike'' in the closure of the complement $S- \Phi$.
 \item No component of the closure of $S-\Phi$ is a disk with $0$, $1$, or $2$ spikes, or an annulus with no spike. 
\end{enumerate}

The rectangles $e_i$ are the \emph{edges} of $\Phi$. The leaves of the foliation of $\Phi$ are the \emph{ties} of the train track. The finitely many ties where three edges meet are the \emph{switches} of the train track $\Phi$. A tie that is not a switch is called \emph{generic}. Note that if we shrink each tie to a point, then the train track $\Phi$ collapses to a trivalent graph. 

A geodesic lamination $\lambda$ is \emph{carried} by the train track $\Phi$, if it is contained in the interior of $\Phi$, and  if each leaf of $\lambda$ is transverse to the ties of $\Phi$. Every geodesic lamination is carried by some train track; see for instance  \cite{Th1, PeH, CEG}.

An  \emph{edge weight system} for the train track $\Phi$  assigns a weight $a(e)\in \mathbb{R}$ to each edge $e$ of $\Phi$ in such a way that for each switch $s$ that is adjacent to the edge $e_i$ on one side and to the edges $e_j$ and $e_k$ on the other side, the following \emph{switch relation}
$$
a(e_i) = a(e_j) + a(e_k)
$$
holds. 

We let $\mathcal{W}(\Phi;\mathbb{R})$ denote the vector space of all edge weight systems for $\Phi$. 

A fundamental example of edge weight system  $a_\alpha \in \mathcal{W}(\Phi;\mathbb{R})$  arises from a transverse cocycle $\alpha \in \mathcal{Z}(\lambda;\mathbb{R})$, when the geodesic lamination $\lambda$ is carried by the train track $\Phi$. Indeed, for every edge $e$ of $\Phi$ and for an arbitrary tie $k_e$ of $e$, define $a_\alpha(e)=\alpha{(k_e)}$. The invariance of $\alpha$ under homotopy respecting $\lambda$ implies that $\alpha(k_e)$ is independent of the tie $k_e$, and the switch relation follows from the finite additivity of $\alpha$. As a consequence, the edge weights $a_\alpha(e)$ define an element $a_\alpha$ of  $\mathcal{W}(\Phi;\mathbb{R})$. 

\begin{prop}
\label{prop:transversecocyclesweightinR}
Let $\lambda$ be a maximal geodesic lamination carried by the train track $\Phi$. Then, the above map $\alpha \mapsto a_\alpha$ defines a linear isomorphism between the vector space $ \mathcal{Z}(\lambda;\mathbb{R})$ of all $\R$-valued  transverse cocycles for $\lambda$ and the vector space $\mathcal{W}(\Phi;\mathbb{R})$ of all edge weight systems for $\Phi$. 

In addition,  these two vector spaces are isomorphic to $\mathbb{R}^{3|\mathcal{X}(S)|}$, where $\mathcal{X}(S)$ is the Euler characteristic of $S$. 
\end{prop}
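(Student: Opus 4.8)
The plan is to first prove that $\alpha\mapsto a_\alpha$ is a linear isomorphism $\mathcal Z(\lambda;\R)\to\mathcal W(\Phi;\R)$, and then to compute $\dim\mathcal W(\Phi;\R)$. Linearity is immediate, and the well-definedness of $a_\alpha$ is already checked in the paragraph preceding the statement. The geometric input I would use is the standard \emph{taut position} property of a carrying train track: every arc $k$ transverse to $\lambda$ with endpoints in $S-\lambda$ is homotopic, through arcs transverse to $\lambda$ with endpoints in $S-\lambda$, to an arc $k'$ contained in $\Phi$ and transverse to the ties, such that every component of the intersection of $k'$ with an edge of $\Phi$ is isotopic, rel $\lambda$, to a tie of that edge. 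Such a $k'$ crosses a finite sequence of edges $e_{i_1},\dots,e_{i_m}$ (with repetitions allowed), and homotopy invariance of $\alpha$ forces $\alpha(k)=\alpha(k')=\sum_{\ell=1}^{m}a_\alpha(e_{i_\ell})$. Injectivity of $\alpha\mapsto a_\alpha$ follows immediately. For surjectivity, given $a\in\mathcal W(\Phi;\R)$ one sets $\alpha(k):=\sum_{\ell}a(e_{i_\ell})$ for a taut representative $k'$ of $k$; the value is independent of the choice because any homotopy between taut representatives decomposes into elementary moves --- sliding a crossing within an edge, which leaves the sum unchanged, and pushing a crossing across a switch $s$, which replaces a crossing of the large branch $e_i^s$ by crossings of the two small branches $e_j^s,e_k^s$ and leaves the sum unchanged precisely because of the switch relation $a(e_i^s)=a(e_j^s)+a(e_k^s)$. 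Finite additivity and homotopy invariance of the resulting $\alpha$, and the identity $a_\alpha=a$, are then clear.

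For the dimension, I would realize $\mathcal W(\Phi;\R)$ as the kernel of the linear map $\partial\colon\R^{E}\to\R^{V}$ sending a weight system to the family of its switch defects $a(e_i^s)-a(e_j^s)-a(e_k^s)$, where $E$ and $V$ are the numbers of edges and switches of $\Phi$. The key point is that $\partial$ is surjective, i.e. the switch relations are linearly independent; this holds because a train track carrying a geodesic lamination is recurrent --- it supports a positive weight system, namely the one coming from a transverse measure on $\lambda$ --- and for a recurrent train track the switch relations are independent. Granting this, $\dim\mathcal W(\Phi;\R)=E-V$. To evaluate $E-V$, note that since $\lambda$ is maximal, $S-\lambda$ is a union of $R$ ideal triangles, and choosing $\Phi$ to be a tight regular neighborhood of $\lambda$ the complement $S-\Phi$ has $R$ ``trigon'' components, each a closed disk with three spikes; trivalence gives $2E=3V$, and since each switch abuts exactly one spike, $V=3R$. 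An Euler characteristic computation $\mathcal X(S)=\mathcal X(\Phi)+\mathcal X(\overline{S-\Phi})-\mathcal X(\Phi\cap\overline{S-\Phi})=(V-E)+R-0=-\tfrac12 V+\tfrac13 V=-\tfrac16 V$ --- using $2E=3V$, $V=3R$, and that the frontier of $\Phi$ is a disjoint union of ``triangle'' graphs of Euler characteristic $0$ --- then gives $V=-6\mathcal X(S)$, $E=-9\mathcal X(S)$, and hence $\dim\mathcal W(\Phi;\R)=E-V=-3\mathcal X(S)=3|\mathcal X(S)|$ since $\mathcal X(S)<0$. Combined with the first part this yields $\mathcal Z(\lambda;\R)\cong\mathcal W(\Phi;\R)\cong\R^{3|\mathcal X(S)|}$.

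The step I expect to be the main obstacle is the linear independence of the switch relations, equivalently the surjectivity of $\partial$; this is the one place where I would invoke external input, namely the recurrence theory of train tracks as in \cite{PeH} or Bonahon \cite{Bon1}. A secondary technical point --- needed both for the isomorphism and for the well-definedness of the reconstructed $\alpha$ --- is the careful verification that any two taut positions of a transverse arc are related by a finite sequence of the two elementary moves above; this is exactly where the hypothesis that $\Phi$ \emph{carries} $\lambda$, rather than merely contains it, enters.
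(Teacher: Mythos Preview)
The paper does not actually prove this proposition; it simply cites \cite[Theorems~11 and~15]{Bon2}. Your outline therefore goes well beyond what the paper does, and the isomorphism half---via taut position and the two elementary moves across a switch---is the standard argument and is correct as sketched.

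The dimension half has a genuine gap at precisely the point you flag. Your justification for the linear independence of the switch relations is that $\Phi$ is recurrent because ``it supports a positive weight system, namely the one coming from a transverse measure on $\lambda$''. This is not valid: a maximal geodesic lamination typically does \emph{not} admit a transverse measure of full support---the isolated leaves that spiral onto the minimal sublaminations carry no transverse measure---so the induced weight system on $\Phi$ need not be strictly positive. Worse, the implication ``recurrent $\Rightarrow$ switch relations independent'' is itself not a general fact. For a connected trivalent train track one can check directly that a nontrivial linear dependence $(c_s)_s$ among the switch relations forces $|c_s|$ to be constant, and that the signs $c_s/|c_s|\in\{\pm1\}$ then encode exactly a \emph{tangential} orientation of the branches; so the switch relations are dependent if and only if the train track is tangentially orientable. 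Tangentially orientable recurrent trivalent train tracks do exist once complementary regions with an even number of cusps are allowed, so recurrence alone cannot be the mechanism.

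What actually makes your computation go through is this same characterization. A train track all of whose complementary regions are trigons can never be tangentially orientable: at each cusp of a trigon both adjacent sides lie on the two-branch side of the switch, and a tangential orientation would force the switch ``types'' at the three cusps to alternate around the trigon, which is impossible since $3$ is odd. Hence for a snug carrier of a maximal lamination the switch relations are independent, and your count $\dim\mathcal W(\Phi;\R)=E-V=3|\chi(S)|$ stands. This non-orientability argument is effectively what underlies the computation in \cite{Bon2}, where it is packaged through the homology of the orientation cover; replacing your recurrence sentence with it would close the gap.
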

\begin{proof}
See \cite[Theorems 11 and 15]{Bon2}.
\end{proof}

\subsection{Twisted $\mathbb{R}^{n-1}$-valued transverse cocycles}
\label{sect: twisted cocycles}

 The notion of $\R$-valued transverse cocycle for a geodesic lamination straightforwardly generalizes to $\R^{n-1}$-valued transverse cocycles. So far, the transverse arcs that we considered in the definition of transverse cocycles were not oriented. However, as in \cite{Dr, BonDr2}, we will need a version of $\mathbb{R}^{n-1}$-valued transverse cocycles that takes the orientation of arcs into account.

A \emph{twisted $\mathbb{R}^{n-1}$-valued transverse cocycle} for $\lambda$ assigns a vector $\alpha(k) \in \mathbb{R}^{n-1} $ to each \emph{oriented} arc $k \subset S$ that is transverse to  $\lambda$, in such a way that the following three conditions are satisfied:
\begin{enumerate}
\item $\alpha$ is finitely additive, in the sense that $\alpha(k)=\alpha(k_1)+\alpha(k_2)$ whenever the oriented arc $k$ transverse to $\lambda$ is split into two oriented arcs $k_1$, $k_2$ with disjoint interiors;

\item $\alpha$ is invariant under homotopy respecting $\lambda$, in the sense that  $\alpha(k)=\alpha(k')$ whenever the arcs $k$ and $k'$ are homotopic through a family of arcs which are all transverse to $\lambda$;

\item $\alpha (\overline k) = \overline{\alpha(k)}$ for every oriented transverse arc $k$, where the oriented arc $\overline k$ is obtained by reversing the orientation of $k$ and where $x \mapsto \overline{x}$ denotes  the involution of $\mathbb{R}^{n-1}$ that reverses the order of the coordinates, namely that associates $\overline{x}=(x_{n-1},x_{n-2},\ldots,x_1)$ to $x=(x_1,x_2,\ldots,x_{n-1})\in \mathbb{R}^{n-1}.$ 
\end{enumerate}

 Let $\twis$ denote the vector space of all twisted $\mathbb{R}^{n-1}$-valued transverse cocycles for $\lambda$.
  
 To understand the space $\twis$ and its elements, it is again useful to consider a train track $\Phi$ carrying $\lambda$. In general, it is not possible to orient the ties of $\Phi$ in a continuous way but $\Phi$ has a well-defined 2-fold \emph{orientation cover} $\widehat \Phi$ consisting of all pairs $(x,o)$ where $x\in \Phi$ and $o$ is an orientation of the tie $k_x$ of $\Phi$ passing through $x$. This cover $\widehat\Phi$ is a train track, and its ties are canonically oriented by using the orientation $o$ at the point $(x,o) \in \widehat\Phi$. 
 
 Let $\iota \colon \widehat\Phi \to \widehat\Phi$ be the involution that exchanges the two sheets of the covering $\widehat\Phi \to \Phi$. By construction, $\iota$ reverses the canonical orientation of the ties of $\widehat\Phi$.

For a twisted cocycle  $\alpha \in \twis$, an arbitrary tie $k_e$ of an edge $e$ of  $\widehat\Phi$ projects to an arc that is transverse to $\lambda$ in $S$ and is oriented by the canonical orientation of the ties of $\widehat \Phi$. This defines a weight $a_\alpha(e) = \alpha(k_e) \in \R^{n-1}$, which does not depend on the choice of the tie $k_e$ by invariance of $\alpha$ under homotopy respecting $\lambda$. The finite additivity property of $\alpha$ implies that these edge weights satisfy the switch relation at the switches of $\widehat\Phi$ and therefore define an $\R^{n-1}$-valued edge weight system $a_\alpha \in \mathcal W(\widehat\Phi; \R^{n-1})$. 

\begin{prop}
\label{prop:transversecocyclesweightinR(n-1)}
Let $\lambda$ be a maximal geodesic lamination carried by the train track $\Phi$, and let $\widehat\Phi$ be the orientation cover of $\Phi$. Then, the above map $\alpha \mapsto a_\alpha$ defines a linear isomorphism between the vector space $\twis$ of all twisted $\R^{n-1}$-valued  transverse cocycles for $\lambda$ and the linear subspace of $\mathcal W(\widehat\Phi; \R^{n-1})$ consisting of those edge weight systems $a \in \mathcal W(\widehat\Phi; \R^{n-1})$ such that
$$
a \big( \iota(e) \big) = \overline{a(e)}
$$
for every edge $e$ of $\widehat\Phi$. 

In addition, the dimension of $\twis$ is equal to $6(g-1) (n-\nolinebreak 1)+\lfloor \frac{1}{2}(n-1)\rfloor$, where $\lfloor x \rfloor$ denotes the largest integer that is less than or equal to $x$.
\end{prop}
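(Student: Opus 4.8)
The plan is to reduce the isomorphism to the scalar statement of Proposition~\ref{prop:transversecocyclesweightinR}, applied on the orientation cover $\widehat\Phi$ with orientations carefully tracked, and then to read off the dimension from a decomposition of $\mathcal W(\widehat\Phi;\R^{n-1})$ under the involution $\iota$ together with an elementary count of cycles in the graphs obtained by collapsing the ties of $\Phi$ and of $\widehat\Phi$.

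\emph{The isomorphism.} The map $\alpha\mapsto a_\alpha$ is linear, and the discussion preceding the statement already shows that it takes values in $\mathcal W(\widehat\Phi;\R^{n-1})$. If $e$ is an edge of $\widehat\Phi$ with tie $k_e$, then a tie of $\iota(e)$ projects to the same transverse arc of $S$ as $k_e$ but with the opposite orientation, since $\iota$ reverses the canonical orientation of ties; so condition~(3) of the definition of a twisted cocycle gives $a_\alpha(\iota(e))=\alpha(\overline{k_e})=\overline{\alpha(k_e)}=\overline{a_\alpha(e)}$, and $a_\alpha$ lies in the announced subspace. To construct the inverse I would rerun on $\widehat\Phi$ the argument of \cite{Bon2} that proves Proposition~\ref{prop:transversecocyclesweightinR}, inserting the orientation bookkeeping: given $a\in\mathcal W(\widehat\Phi;\R^{n-1})$ with $a(\iota(e))=\overline{a(e)}$ and an oriented arc $k\subset S$ transverse to $\lambda$, homotope $k$ respecting $\lambda$ so that $k\cap\Phi$ is a disjoint union of arcs $k_1,\dots,k_m$, each crossing one edge of $\Phi$ from one short side to the opposite one, the rest of $k$ being disjoint from $\lambda$; the orientation of $k_i$ inherited from $k$ lifts it to a tie of a well-defined edge $\widehat e_i$ of $\widehat\Phi$, and one sets $\alpha(k)=\sum_i a(\widehat e_i)$. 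The switch relations on $\widehat\Phi$ make this independent of the choices and, exactly as in the scalar case, yield finite additivity and homotopy invariance, while $a(\iota(e))=\overline{a(e)}$ yields condition~(3), since reversing the orientation of $k$ replaces every $\widehat e_i$ by $\iota(\widehat e_i)$. The cocycle $\alpha$ so obtained lies in $\twis$ and is the unique one with $a_\alpha=a$, which proves the first assertion.

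\emph{The dimension.} Let $\sigma$ be the involution $a\mapsto\bigl(e\mapsto a(\iota(e))\bigr)$ of $\mathcal W(\widehat\Phi;\R)$ and $\tau$ the involution $x\mapsto\overline x$ of $\R^{n-1}$, and split $\mathcal W(\widehat\Phi;\R)=\mathcal W^+\oplus\mathcal W^-$ and $\R^{n-1}=V^+\oplus V^-$ into their $(\pm1)$-eigenspaces, so $\dim V^+=\bigl\lceil\tfrac12(n-1)\bigr\rceil$ and $\dim V^-=\bigl\lfloor\tfrac12(n-1)\bigr\rfloor$. Under the identification $\mathcal W(\widehat\Phi;\R^{n-1})\cong\mathcal W(\widehat\Phi;\R)\otimes\R^{n-1}$ the constraint $a(\iota(e))=\overline{a(e)}$ becomes $(\sigma\otimes\tau)(a)=a$, so the first part identifies $\twis$ with $(\mathcal W^+\otimes V^+)\oplus(\mathcal W^-\otimes V^-)$, and it remains to compute $\dim\mathcal W^{\pm}$. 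An edge or switch of $\Phi$ is simply connected, so its preimage in $\widehat\Phi$ consists of two disjoint copies exchanged by $\iota$; hence $\iota$-invariant weight systems on $\widehat\Phi$ correspond bijectively to weight systems on $\Phi$, so $\mathcal W^+\cong\mathcal W(\Phi;\R)$ has dimension $6(g-1)$ by Proposition~\ref{prop:transversecocyclesweightinR}. For $\dim\mathcal W^-$ I would use the elementary fact that, for a connected train track, the space of $\R$-valued weight systems has dimension equal to the first Betti number of the graph obtained by collapsing the ties if those ties admit a coherent orientation, and one less than that Betti number otherwise. Since a maximal geodesic lamination is non-orientable, $\Phi$ is in the second case while $\widehat\Phi$, whose ties are canonically oriented, is in the first; moreover $\widehat\Phi\to\Phi$ is a connected double cover, so the collapsed graphs $\widehat G$ and $G$ satisfy $\chi(\widehat G)=2\chi(G)$, hence $b_1(\widehat G)=2b_1(G)-1$. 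Therefore $b_1(G)=\dim\mathcal W(\Phi;\R)+1=6(g-1)+1$, then $\dim\mathcal W(\widehat\Phi;\R)=b_1(\widehat G)=12(g-1)+1$, and so $\dim\mathcal W^-=\bigl(12(g-1)+1\bigr)-6(g-1)=6(g-1)+1$. Assembling these,
$$
\dim\twis=6(g-1)\Bigl\lceil\tfrac12(n-1)\Bigr\rceil+\bigl(6(g-1)+1\bigr)\Bigl\lfloor\tfrac12(n-1)\Bigr\rfloor ,
$$
and a short computation distinguishing the parity of $n-1$ shows the right-hand side equals $6(g-1)(n-1)+\bigl\lfloor\tfrac12(n-1)\bigr\rfloor$.

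\emph{Where the difficulty lies.} The bijectivity of $\alpha\mapsto a_\alpha$ is a rerun of Bonahon's scalar argument with orientation bookkeeping and should be routine, and the eigenspace splitting $\twis\cong(\mathcal W^+\otimes V^+)\oplus(\mathcal W^-\otimes V^-)$ is purely formal. The substance is in the dimension of $\mathcal W^-$: it rests on the Betti-number dichotomy according to transverse orientability, on the fact that a maximal geodesic lamination is never orientable --- so that $\widehat\Phi\to\Phi$ is genuinely a connected double cover --- and on the observation that this forces $\dim\mathcal W^-=\dim\mathcal W^++1$, which, paired against the smaller antisymmetric eigenspace $V^-$, is exactly what produces the correction term $\lfloor\tfrac12(n-1)\rfloor$.
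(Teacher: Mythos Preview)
Your argument is correct. The paper does not actually prove this statement; its entire proof is the citation ``See \cite[Proposition~4.7]{BonDr2}.'' The strategy you describe---reduce the isomorphism to the untwisted scalar case on $\widehat\Phi$ with orientation bookkeeping, then split $\mathcal W(\widehat\Phi;\R^{n-1})\cong\mathcal W(\widehat\Phi;\R)\otimes\R^{n-1}$ into eigenspaces of the commuting involutions $\sigma$ and $\tau$---is essentially the argument of \cite{BonDr2}, and your dimension arithmetic is right.

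Two assertions you state without proof deserve a line each. First, the Betti-number dichotomy: identifying $\mathcal W(\widehat\Phi;\R)$ with $H_1(\widehat G;\R)$ as in Lemma~\ref{lem:TransverseCocycleDefinesHomology}, your involution $\sigma$ corresponds to $-\iota_*$ on homology (because $\iota$ reverses the tie orientation and hence the edge orientations of $\widehat G$), so $\mathcal W^+$ is the $(-1)$-eigenspace of the covering involution on $H_1(\widehat G;\R)$, of dimension $b_1(G)-1$ when $\widehat G$ is connected. Second, the non-orientability of a maximal lamination: a coherent transverse orientation of $\Phi$ would have to flip between ``into'' and ``out of'' each complementary trigon at each of its three spikes, which is impossible for an odd number. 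Both are short, but the second is exactly what makes $\widehat\Phi$ connected and produces the correction term $\lfloor\tfrac12(n-1)\rfloor$, so it is worth making explicit.
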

\begin{proof}
See \cite[Proposition 4.7]{BonDr2}.
\end{proof}

\subsection{Edge weight systems and homology} 
\label{sect: transverse cocycles and homology}

Let $\widehat \Phi$ be the orientation cover of the train track $\Phi$. Recall that the ties of $\widehat \Phi$ are canonically oriented, so that each such tie defines a relative homology class $[k] \in H_1(\widehat\Phi, \partial \widehat\Phi; \R)$. 

Also, as a surface the train track $\widehat \Phi$ can be oriented by lifting the orientation of the surface $S$. This defines an algebraic intersection form
$$
\cdot\ \colon H_1(\widehat\Phi, \partial \widehat\Phi; \R) \times H_1(\widehat\Phi; \R) \to \R.
$$

\begin{lem}
\label{lem:TransverseCocycleDefinesHomology}
 Let $a \in \mathcal W(\widehat\Phi; \R)$ be an edge weight system for $\widehat\Phi$. Then, there is a unique homology class $[a] \in H_1(\widehat\Phi; \R)$ such that for every edge $e$ of $\widehat\Phi$ and every generic tie $k_e$ of $e$, the algebraic intersection number $ [k_e] \cdot [a]$ is equal to the weight $a(e)$.
 \end{lem}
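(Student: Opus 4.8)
The plan is to construct $[a]$ explicitly as the class of a $1$-cycle built from the edge weights, and then to deduce uniqueness from an immediate nondegeneracy remark. Recall first that collapsing each tie of $\widehat\Phi$ to a point turns $\widehat\Phi$ into a trivalent graph $G$ onto which $\widehat\Phi$ deformation retracts, so that $H_1(\widehat\Phi;\R)\cong H_1(G;\R)=Z_1(G;\R)\subset C_1(G;\R)=\bigoplus_e\R\,\gamma_e$, where $\gamma_e$ denotes the core of the edge $e$ (the arc running in the long direction of the rectangle $e$). The key observation is that on the orientation cover $\widehat\Phi$ each core $\gamma_e$ carries a \emph{canonical orientation}: since the ties of $\widehat\Phi$ are canonically oriented, one orients $\gamma_e$ by rotating the tie orientation by a quarter turn, using the orientation of the surface $\widehat\Phi$.

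Next I would check that, with these canonical orientations, the edge-weight switch relations become exactly the condition that $\sum_e a(e)\,\gamma_e$ is a $1$-cycle. At a switch $s$ of $\widehat\Phi$, with $e_i$ on one side and $e_j,e_k$ on the other, the ties near $s$ are all coherently oriented --- this is essentially the defining property of the orientation cover --- and inspecting the local model of a trivalent switch (a disk with one cusp on its boundary) shows that, with the canonical orientations, the cores of $e_j$ and $e_k$ both point toward $s$ precisely when the core of $e_i$ points away from $s$ (or all three are reversed). Hence for any family $(c_e)_e$ the chain $\sum_e c_e\,\gamma_e$ is a $1$-cycle in $G$ (flow conservation at every vertex) if and only if $c_{e_i}=c_{e_j}+c_{e_k}$ at every switch, i.e.\ if and only if $(c_e)_e$ is an edge weight system. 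In particular $\sum_e a(e)\,\gamma_e$ is a $1$-cycle, and I would define $[a]\in H_1(\widehat\Phi;\R)$ to be its class (up to a universal sign, harmless for the reason explained below).

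For the pairing: a generic tie $k_e$ is a properly embedded oriented arc in $\widehat\Phi$ meeting $\gamma_{e'}$ transversely in a single point if $e'=e$ and not at all otherwise, and by the quarter-turn convention relating the orientations of $k_e$, $\gamma_e$ and $\widehat\Phi$ this intersection has sign $+1$; thus $[k_e]\cdot[\gamma_{e'}]=\delta_{ee'}$ and $[k_e]\cdot[a]=\sum_{e'}a(e')\,[k_e]\cdot[\gamma_{e'}]=a(e)$, as required. This also disposes of the sign ambiguity above: if instead $[k_e]\cdot[\gamma_e]=\epsilon$ with $\epsilon=\pm1$, one replaces $a$ by $\epsilon a$ in the definition of $[a]$, still a $1$-cycle, and recovers $[k_e]\cdot[a]=a(e)$. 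Uniqueness is then immediate: if $c\in H_1(\widehat\Phi;\R)$ satisfies $[k_e]\cdot c=0$ for every edge $e$, then writing $c=\sum_e c_e\,\gamma_e\in Z_1(G;\R)$ the identity $[k_e]\cdot c=c_e$ forces $c=0$, so $[a]$ is the unique class with the stated property.

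The step that will require genuine care is the second one: verifying from the local picture of a trivalent switch that the canonical orientations of the cores convert the switch relation $a(e_i)=a(e_j)+a(e_k)$ into the cycle condition. This is exactly where the orientation cover is used --- on $\Phi$ itself the ties (hence the cores) need not be coherently orientable, whereas on $\widehat\Phi$ they are --- and one must be careful about the degenerate cases where some of $e_i,e_j,e_k$ coincide. Everything else (the deformation retraction onto $G$, the equality $H_1=Z_1$ for a graph, and the transverse-intersection count) is routine.
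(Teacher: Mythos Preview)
Your proposal is correct and follows essentially the same route as the paper: collapse $\widehat\Phi$ to a trivalent graph, orient each edge (the paper says ``to the left of the oriented ties,'' which is exactly your quarter-turn convention), observe that the switch relation is precisely the cycle condition, and define $[a]$ as the class of $\sum_e a(e)\gamma_e$. Your uniqueness argument via the dual-basis relation $[k_e]\cdot[\gamma_{e'}]=\delta_{ee'}$ is the ``more elementary consideration'' the paper alludes to alongside its reference to Poincar\'e duality; otherwise the two arguments are the same.
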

 
\begin{proof}
 If we collapse each tie of $\widehat \Phi$ to a point, then $\widehat \Phi$ collapses to a trivalent graph $\Gamma$. Orient the edges of $\Gamma$ so that they point to the left of the oriented ties of $\widehat \Phi$.
 
 By construction, each edge $e$ of $\widehat\Phi$ projects to an edge $f_e$ of $\Gamma$. The fact that  $a \in \mathcal W(\widehat\Phi; \R)$ satisfies the switch relation implies that  the chain $\sum_f a(e) f_e$ is closed and defines a homology class $[a] \in H_1(\Gamma; \R) \cong H_1(\widehat\Phi; \R)$.
 
 The construction is specially designed so that  $ [k_e] \cdot [a] = a(e)$ for every edge $e$ of $\widehat\Phi$. It easily follows from Poincar\'e duality (or more elementary considerations) that $[a]$ is the only homology class with this property. 
 \end{proof}
 
 In fact, Lemma~\ref{lem:TransverseCocycleDefinesHomology} played a key role in the dimension computations of Propositions~\ref{prop:transversecocyclesweightinR} and \ref{prop:transversecocyclesweightinR(n-1)} in the articles \cite{Bon2, BonDr2}. 
 
 The connection between Theorems~\ref{thm:MainThm1} and \ref{thm:MainThm2} is provided by the following elementary computation. 
 At each switch $s$ of $\widehat \Phi$, there is a single incoming edge $e^{\mathrm{in}}_s$ on one side of $s$ and two outgoing edges $e^{\mathrm{left}}_s$ and $e^{\mathrm{right}}_s$ on the other side, with  $e^{\mathrm{left}}_s$ and $e^{\mathrm{right}}_s$ respectively diverging to the left and to the right as seen from the incoming edge $e^{\mathrm{in}}_s$ and for the orientation of $S$.
 
 \begin{lem}
 \label{lem:ComputeIntersectionEdgeWeightSystems}
 For two edge weight systems $a_1$, $a_2  \in \mathcal W(\widehat\Phi; \R)$, the algebraic intersection number of the associated homology classes $[a_1]$, $[a_2] \in H_1(\widehat\Phi; \R)$ is equal to
 $$
 [a_1] \cdot [a_2] =\frac12 \sum_s \big( a_1(e^{\mathrm{right}}_s)a_2(e^{\mathrm{left}}_s) - a_1(e^{\mathrm{left}}_s) a_2(e^{\mathrm{right}}_s) \big)
 $$
\end{lem}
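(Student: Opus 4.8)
The plan is to compute $[a_1]\cdot[a_2]$ directly from the explicit cycle representatives constructed in the proof of Lemma~\ref{lem:TransverseCocycleDefinesHomology}, by pushing one of them into a position transverse to the other and counting signed intersection points. So first I would recall that proof's set-up: collapsing every tie of $\widehat\Phi$ to a point turns $\widehat\Phi$ into a trivalent graph $\Gamma$ onto which $\widehat\Phi$ deformation retracts; each edge $e$ of $\widehat\Phi$ gives an edge $f_e$ of $\Gamma$, oriented so as to point to the left of the oriented ties; and the class $[a_i]\in H_1(\widehat\Phi;\R)\cong H_1(\Gamma;\R)$ is represented by the $1$-cycle $c_i=\sum_e a_i(e)\,f_e$, the condition $\partial c_i=0$ at a vertex being precisely the switch relation $a_i(e^{\mathrm{in}}_s)=a_i(e^{\mathrm{left}}_s)+a_i(e^{\mathrm{right}}_s)$ at the corresponding switch $s$. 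Thus $[a_1]\cdot[a_2]$ is, by definition, the signed count of intersections of $c_1$ with a transverse perturbation of $c_2$ inside the oriented surface $\widehat\Phi$.

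Next I would produce such a perturbation and localise the intersections. Since $\widehat\Phi$ is a genuine surface in which each edge is a band foliated by coherently oriented ties, I would keep $c_1$ on the spine $\Gamma$ and push $c_2$ off $\Gamma$ to one definite side of every band, the choice of side being dictated by the tie orientation so that it is globally coherent; call the result $c_2'$. In the interior of each edge, $c_1$ and $c_2'$ then sit at distinct transverse levels of the band and are disjoint, so every point of $c_1\cap c_2'$ lies in a small disk $D_s$ around some switch $s$, and
$$[a_1]\cdot[a_2]=\sum_s (c_1\cdot c_2')_{D_s},$$
the sum running over the switches of $\widehat\Phi$.

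The heart of the matter is then the local number $(c_1\cdot c_2')_{D_s}$. Inside $D_s$ the cycle $c_1$ is a ``weighted tripod'' centred at $s$ with legs of weights $a_1(e^{\mathrm{in}}_s)$, $a_1(e^{\mathrm{left}}_s)$, $a_1(e^{\mathrm{right}}_s)$, and $c_2'$ is the analogous tripod for $a_2$ pushed to one side, which I would present as a superposition of two embedded weighted arcs joining the three points of $\Gamma\cap\partial D_s$. Counting the signed crossings of these arcs against the legs of $c_1$, and recording the cyclic order of the three half-edges at $s$ fixed by the orientation of $S$, expresses $(c_1\cdot c_2')_{D_s}$ as a ``cyclic'' combination of the form $\tfrac12\sum\bigl(a_1(X)a_2(Y)-a_1(Y)a_2(X)\bigr)$, the sum running over the three edges $X$ at $s$ with $Y$ the next edge after $X$ in the cyclic order. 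Feeding in the switch relation $a_i(e^{\mathrm{in}}_s)=a_i(e^{\mathrm{left}}_s)+a_i(e^{\mathrm{right}}_s)$ makes every occurrence of $e^{\mathrm{in}}_s$ cancel and collapses this to $\tfrac12\bigl(a_1(e^{\mathrm{right}}_s)a_2(e^{\mathrm{left}}_s)-a_1(e^{\mathrm{left}}_s)a_2(e^{\mathrm{right}}_s)\bigr)$. Summing over all switches then gives the stated identity.

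I expect the only real difficulty to be the orientation bookkeeping: one must track, in a mutually consistent way, the orientation of $S$ (hence of $\widehat\Phi$ and of each $D_s$), the coherent orientation of the ties of $\widehat\Phi$ coming from the orientation cover, the resulting orientations of the edges $f_e$ of $\Gamma$, and the side to which $c_2$ is pushed, so as to be certain that the local contribution is $+\tfrac12(a_1(e^{\mathrm{right}}_s)a_2(e^{\mathrm{left}}_s)-a_1(e^{\mathrm{left}}_s)a_2(e^{\mathrm{right}}_s))$ and not its negative, and that the factor $\tfrac12$ is neither lost nor doubled. As a sanity check one can use that the left-hand side $[a_1]\cdot[a_2]$ does not depend on the auxiliary choices (side of the push, realisation of $c_2'$ by arcs). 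Alternatively, since both sides are bilinear and antisymmetric in $(a_1,a_2)$, it suffices to verify the identity on a spanning family of $\mathcal W(\widehat\Phi;\R)\cong H_1(\widehat\Phi;\R)$ — for instance on the edge weight systems supported on embedded oriented loops of $\Gamma$ — where the signed intersection count is completely transparent and its switch-by-switch contributions are visibly the terms on the right.
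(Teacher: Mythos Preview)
Your plan is essentially the paper's: embed the trivalent spine $\Gamma$ in $\widehat\Phi$, represent each $[a_i]$ by the weighted $1$-cycle $\sum_e a_i(e)\,f_e$, push one copy in the direction of the oriented ties to a parallel graph $\Gamma'$, and count crossings switch by switch. The one substantive difference is in the local count. You assert that the contribution at a switch is directly the cyclic expression $\tfrac12\sum\bigl(a_1(X)a_2(Y)-a_1(Y)a_2(X)\bigr)$, which you then simplify with the switch relation. The paper instead observes that the push produces exactly \emph{one} crossing of $\Gamma$ with $\Gamma'$ at each switch, whose signed weight is $a_1(e_s^{\mathrm{right}})a_2(e_s^{\mathrm{left}})$ when the two outgoing edges lie to the left of the oriented tie $s$ and $-a_1(e_s^{\mathrm{left}})a_2(e_s^{\mathrm{right}})$ when they lie to the right. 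This yields the asymmetric formula
\[
[a_1]\cdot[a_2]=\sum_{s\ \text{left-diverging}}a_1(e_s^{\mathrm{right}})a_2(e_s^{\mathrm{left}})\ -\sum_{s\ \text{right-diverging}}a_1(e_s^{\mathrm{left}})a_2(e_s^{\mathrm{right}}),
\]
and the stated symmetric formula with the factor $\tfrac12$ is obtained only afterwards, by writing $[a_1]\cdot[a_2]=\tfrac12\bigl([a_1]\cdot[a_2]-[a_2]\cdot[a_1]\bigr)$. So in the paper the $\tfrac12$ arises from a global antisymmetrization rather than from the local picture, and the switch relation is never used; your cyclic local formula with the $\tfrac12$ already present does not match the single-crossing count for the naive push and would need its own justification. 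The paper's route also dissolves most of the orientation bookkeeping you were worried about, since only the dichotomy ``outgoing pair to the left or right of the oriented tie'' has to be tracked.
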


\begin{proof}
 The graph $\Gamma$ used in the proof of Lemma~\ref{lem:TransverseCocycleDefinesHomology} can easily be chosen so that it is contained in the interior of $\widehat\Phi$, it is transverse to the ties of $\widehat\Phi$, and the inclusion map $\Gamma \to \widehat\Phi$ is a homotopy  equivalence. Let $\Gamma'$ be obtained by slightly pushing $\Gamma$ in the direction given by the orientation of the ties of $\widehat\Phi$. To compute the intersection number $ [a_1] \cdot [a_2]$, realize the homology class $[a_1]$ by a linear combination of edges of $\Gamma$, and $[a_2]$ by a linear combination of edges of $\Gamma'$. 
 
 Each switch $s$ of $\widehat\Phi$ contributes a point to the intersection $\Gamma\cap \Gamma'$. Note that there are two types of switches of $s$: those where the two outgoing edges  $e^{\mathrm{left}}_s$ and $e^{\mathrm{right}}_s$ are on the left of the oriented arc $s$, and those where they are on the right. If we refer to these two types as \emph{left-diverging} and \emph{right-diverging}, respectively, and consider their respective contributions to $ [a_1] \cdot [a_2] $, we immediately see that
  $$
 [a_1] \cdot [a_2] = \kern -10pt \sum_{s\text{ left-diverging}}  \kern -10pt a_1(e^{\mathrm{right}}_s)a_2(e^{\mathrm{left}}_s) -  \kern -10pt \sum_{s\text{ right-diverging}}  \kern -10pt a_1(e^{\mathrm{left}}_s) a_2(e^{\mathrm{right}}_s) .
 $$

Using the antisymmetry of the intersection number and writing $ [a_1] \cdot [a_2] = \frac12 \big(  [a_1] \cdot [a_2] -  [a_2] \cdot [a_1] \big)$ provides the formula stated in the lemma. 
\end{proof}

\section{Shearing deformations of Hitchin representations}
\label{sect:ShearingDef}

Shearing deformations in the Teichm\"uller space $\Hit_2(S)$ were introduced by W. Thurston \cite{Th3, Bon1}, and themselves generalize the earlier notion of earthquakes \cite{Th2}. In that case, a shearing deformation is determined by a transverse cocycle $\alpha \in \mathcal Z( \lambda;\R)$ and, when $\lambda$ is a maximal geodesic lamination, provide a parametrization of $\Hit_2(S)$ by an explicit polytope in $\mathcal Z( \lambda;\R)$. 

The extension of shearing deformations to the Hitchin component $\Hit_n(S)$ was developed by Dreyer and Bonahon in \cite{Dr, BonDr2}. We now review the main features of this construction. 

\subsection{The Hitchin component} 
\label{sect: Hitchin}
 Let $S$ be a closed, connected, oriented surface of genus at least 2. The character variety of $S$ is defined to be  
\[\mathcal{X}_{\mathrm{PSL}_n(\mathbb{R})}(S)=\{\mathrm{homomorphisms} \,\, \rho : \pi_1(S)\rightarrow \mathrm{PSL}_n(\mathbb{R})\} /\kern-3pt/ \mathrm{PSL}_n(\mathbb{R}) \]
 consisting of all group homomorphisms $\pi_1(S)\rightarrow \mathrm{PSL}_n(\mathbb{R})$ from the fundamental group $\pi_1(S)$ to the Lie group $\mathrm{PSL}_n(\mathbb{R})$, considered up to conjugation by elements of  $\mathrm{PSL}_n(\mathbb{R})$. Note that the projective special linear group $\mathrm{PSL}_n(\R)$ is equal to the special linear group $\mathrm{SL}_n(\R)$ if $n$ is odd, and to $\mathrm{SL}_n(\R)/\{\pm Id\}$ if $n$ is even.
 
In the case where $n=2$, there is a preferred component of $\mathcal{X}_{\mathrm{PSL}_2(\mathbb{R})}(S)$, which is known as \emph{Teichmüller component} $\Hit_2(S)$. As is well known that the Teichmüller component $\Hit_2(S)$ is diffeomorphic to the space of complex structures on $S$ by the Uniformization Theorem.
 
 In the general case, there is a preferred homomorphism $\mathrm{PSL}_2(\mathbb{R}) \rightarrow \mathrm{PSL}_n(\mathbb{R})$ coming from the unique $n-$dimensional irreducible representation of $\mathrm{SL}_2(\mathbb{R}).$ This provides a natural map from $\mathcal{X}_{\mathrm{PSL}_2(\mathbb{R})}(S)$ to $\mathcal{X}_{\mathrm{PSL}_n(\mathbb{R})}(S).$ 
  
  The \emph{Hitchin component} Hit$_n(S)$ is the component of $\mathcal{X}_{\mathrm{PSL}_n(\mathbb{R})}(S)$ that contains the image of the Teichm\"{u}ller component of $\mathcal{X}_{\mathrm{PSL}_2(\mathbb{R})}(S).$ N. Hitchin \cite{Hi} was the first to single out this component. 
 
 The elements of the Hitchin component $\Hit_n(S)$ similarly enjoy many powerful geometric properties  from a variety of viewpoints, involving differential geometry, dynamical systems or the theory of Lie groups; see for instance the fundamental results of  Hitchin \cite{Hi}, Labourie \cite{Lab} and Fock-Goncharov \cite{FoG}. In particular, using the point of view of Higgs bundles, Hitchin proved in \cite{Hi} that $\Hit_n(S)$ is diffeomorphic to the Euclidean space $\R^{2(g-1)(n^2-1)}$.
 
 A \emph{Hitchin character} is an element of the Hitchin component, and a \emph{Hitchin homomorphism} is a homomorphism $\rho \colon \pi_1(S) \to \mathrm{PSL}_n(\R)$ representing a Hitchin character.

\subsection{The flag curve}
\label{sect: flag curve}

A \emph{flag} in $\mathbb{R}^n$ is a family $F$ of nested linear subspaces $F^{(0)}\subset F^{(1)}\subset \ldots \subset F^{(n-1)}\subset F^{(n)}$ of $\mathbb{R}^n$ where each $F^{(a)}$ has dimension $a.$

Two flags $E$ and $F$ are \emph{transverse to each other}, if every subspace $E^{(a)}$ of $E$ is transverse to every subspace $F^{(b)}$ of $F.$ This is equivalent to property that $\mathbb{R}^n=E^{(a)}\oplus F^{(n-a)}$ for every $a$. 

Let $\widetilde S$ be the universal cover of $S$, and let $\partial_\infty \widetilde S$ be its circle at infinity. 

\begin{prop}[\cite{Lab}]
\label{prop:flag curve}
For a Hitchin representation
$\rho \colon  \pi_1(S) \rightarrow \mathrm{PSL}_n(\mathbb{R}),$
there exists a unique continous map 
$$\mathcal{F}_{\rho}\colon \partial_{\infty}\widetilde{S}\rightarrow \mathrm{Flag}(\mathbb{R}^n)$$ that is  $\rho$--equivariant in the sense that $\mathcal{F}_{\rho}(\gamma{x})=\rho(\gamma)(\mathcal{F}_{\rho}({x}))$ for every ${x} \in \partial_{\infty}\widetilde{S}$ and $\gamma \in \pi_1(S)$.

In addition, $\mathcal{F}_{\rho}$ is H\"{o}lder continuous, and for any two distinct points $ x$, $ y \in \partial_{\infty}\widetilde{S}$, the two flags $\mathcal{F}_{\rho}({x})$, $\mathcal{F}_{\rho}({y})\in \mathrm{Flag}(\mathbb{R}^n)$ are transverse to each other.
\end{prop}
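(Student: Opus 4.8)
The plan is to identify $\mathcal F_\rho$ with the limit map of $\rho$, using Labourie's fundamental observation that Hitchin homomorphisms are \emph{Anosov} with respect to the Borel subgroup of $\psl$. Concretely, I would form the flat $\R^n$-bundle $E_\rho$ over the geodesic-flow space $U=\bigl(\partial_\infty^{(2)}\widetilde S\times\R\bigr)/\pi_1(S)$, where $\partial_\infty^{(2)}\widetilde S$ is the space of distinct pairs of endpoints, the $\R$ factor records the time parameter along a geodesic, and $\pi_1(S)$ acts cocompactly; the flat connection on $E_\rho$ induces a flow $\phi_t$ on $E_\rho$ lifting the geodesic flow on $U$. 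The key analytic input is that $\phi_t$ admits a continuous $\phi_t$-invariant decomposition $E_\rho=L_1\oplus L_2\oplus\dots\oplus L_n$ into line subbundles with $L_i$ strictly dominating $L_{i+1}$. Granting this, I would define $\mathcal F_\rho(x)^{(a)}$ to be the subspace $L_1\oplus\dots\oplus L_a$ evaluated at any lift of a point of $U$ whose \emph{attracting} endpoint is $x\in\partial_\infty\widetilde S$; domination guarantees that this subspace depends only on $x$, not on the repelling endpoint or on the time. The equivariance $\mathcal F_\rho(\gamma x)=\rho(\gamma)\mathcal F_\rho(x)$ is then immediate from the construction, and continuity of $\mathcal F_\rho$ follows from continuity of the splitting.

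For uniqueness, let $g\colon\partial_\infty\widetilde S\to\mathrm{Flag}(\R^n)$ be any continuous $\rho$-equivariant map. For each $\gamma\in\pi_1(S)\setminus\{1\}$ the matrix $\rho(\gamma)$ is purely loxodromic, i.e.\ diagonalizable over $\R$ with eigenvalues of pairwise distinct modulus (a consequence of the Anosov property established in the next step), so it has a unique attracting fixed point $F_\gamma\in\mathrm{Flag}(\R^n)$. Equivariance forces both $g$ and $\mathcal F_\rho$ to send the attracting fixed endpoint $\gamma^+\in\partial_\infty\widetilde S$ of $\gamma$ to $F_\gamma$. Since the set $\{\gamma^+:\gamma\in\pi_1(S)\setminus\{1\}\}$ is dense in $\partial_\infty\widetilde S$ and both maps are continuous, $g=\mathcal F_\rho$.

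The principal obstacle is establishing the invariant line-bundle decomposition for \emph{every} $\rho\in\Hit_n(S)$; this is the technical core of Labourie's theorem, and I would derive it from the connectedness of $\Hit_n(S)$ by showing that admitting such a dominated splitting is both an open and a closed condition on $\Hit_n(S)$. Openness is a standard consequence of the robustness of dominated splittings under perturbation of the defining cocycle. The condition holds at Fuchsian characters, where a hyperbolic structure on $S$ produces the osculating-flag (Frenet) curve of the Veronese embedding $\partial_\infty\widetilde S\hookrightarrow\R^n$ coming from the irreducible $\mathrm{PSL}_2(\R)\hookrightarrow\psl$. Closedness inside $\Hit_n(S)$ is the delicate point: one must prevent the line subbundles from colliding in a limit, which requires uniform a priori estimates --- controlled spectral gaps along closed geodesics, or equivalently the hyperconvexity and positivity bounds specific to the Hitchin component (Labourie \cite{Lab}, Fock-Goncharov \cite{FoG}). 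A substantial but more combinatorial alternative for the entire proposition is the Fock-Goncharov construction: their positive coordinates on $\Hit_n(S)$ attached to an ideal triangulation allow one to write $\mathcal F_\rho$ explicitly on the vertex set of the induced triangulation of $\widetilde S$, with positivity forcing convergence, continuity, and transversality simultaneously.

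Finally I would address H\"older continuity and transversality. H\"older regularity of $\mathcal F_\rho$ is the standard regularity statement for limit maps of Anosov representations: comparing the exponential contraction and expansion rates of $\phi_t$ along the line subbundles $L_i$ with the unit speed of the geodesic flow produces a definite H\"older exponent, by the same mechanism that makes the strong stable and unstable foliations of an Anosov flow H\"older. For transversality, fix distinct $x,y\in\partial_\infty\widetilde S$ and evaluate the splitting along the geodesic of $\widetilde S$ with repelling endpoint $y$ and attracting endpoint $x$. Along that geodesic $\mathcal F_\rho(x)^{(a)}=L_1\oplus\dots\oplus L_a$ is spanned by the $a$ most strongly attracted lines of $\phi_t$, while reversing the flow --- which exchanges the two endpoints and reverses the domination order --- shows that $\mathcal F_\rho(y)^{(n-a)}=L_{a+1}\oplus\dots\oplus L_n$ is spanned by the remaining, most strongly repelled lines. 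These subspaces are complementary, so $\R^n=\mathcal F_\rho(x)^{(a)}\oplus\mathcal F_\rho(y)^{(n-a)}$ for every $a$, which is exactly the transversality of the flags $\mathcal F_\rho(x)$ and $\mathcal F_\rho(y)$ asserted in the proposition.
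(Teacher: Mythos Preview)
The paper does not prove this proposition at all: it is stated with the attribution \cite{Lab} and no argument is given, the result being quoted as background from Labourie's work. Your sketch is a faithful outline of Labourie's Anosov approach --- build the flat bundle over the unit tangent bundle, invoke the dominated line splitting, read off the flag at the attracting endpoint, and deduce equivariance, H\"older regularity, and transversality from the dynamics --- so in that sense you are aligned with what the paper is citing rather than diverging from it.

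One remark on content: your uniqueness paragraph is slightly optimistic. Equivariance tells you that $g(\gamma^+)$ is \emph{some} fixed flag of $\rho(\gamma)$, but a purely loxodromic element has $n!$ fixed flags, and you still need to pin down that $g(\gamma^+)$ is the attracting one. The usual fix is to pick $x\neq\gamma^-$ and use $\gamma^k x\to\gamma^+$ together with continuity to get $\rho(\gamma)^k g(x)\to g(\gamma^+)$; but this only forces $g(\gamma^+)=F_\gamma$ once you know $g(x)$ lies in the (open dense) basin of $F_\gamma$, which is not automatic for an arbitrary continuous equivariant $g$. In practice one either restricts the uniqueness statement to \emph{transverse} (or dynamics-preserving) equivariant maps, or one appeals to an additional argument specific to the Hitchin setting. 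Since the paper is simply quoting the result, this does not affect anything downstream, but it is worth being aware that the one-line uniqueness argument as written has a small gap.
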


 By definition, this map $\mathcal{F}_{\rho}\colon \partial_{\infty}\widetilde{S}\rightarrow \mathrm{Flag}(\mathbb{R}^n)$ is called the \emph{flag curve} of the Hitchin homomorphism $\rho\colon \pi_1(S)\rightarrow \mathrm{PSL}_n(\mathbb{R})$.
 
\subsection{Elementary shearing}
\label{sect: elementary shearing}
 Let $g$ be an oriented geodesic of the universal cover $\widetilde{S}$ with positive endpoint ${x}_+ \in \partial_{\infty}\widetilde{S}$ and negative endpoint ${x}_-\in \partial_{\infty}\widetilde{S}$. By Proposition \ref{prop:flag curve}, there exist two flags  $E=\mathcal{F}_{\rho}({x}_-)$, $F=\mathcal{F}_{\rho}({x}_+) \in \flag$ which are transverse to each other, and therefore define a  line decomposition $\mathbb{R}^n = \bigoplus_{a=1}^n L_a$ with $L_a=E^{(a)}\cap F^{(n-a+1)}$. 
 
 For $(u_1,u_2,\ldots,u_{n-1})\in \mathbb{R}^{n-1}$, let $v_1$, $v_2$, \dots, $v_n\in \R$ be uniquely determined by the properties that $u_a=v_a-v_{a+1}$ and $\sum_{a=1}^n v_a=0$.  Then, let $T_g^{(u_1,u_2,\ldots,u_{n-1})}\colon \mathbb{R}^n \rightarrow \mathbb{R}^n$ be the linear map that respects the line decomposition $\mathbb{R}^n = \bigoplus_{a=1}^n L_a$ and  acts as multiplication by $e^{v_a}$ on each line $L_a$.
 
 This definition can be made more explicit by noting that $v_a = -\sum_{b=1}^{a-1} \frac bn u_b + \sum_{b=a}^{n-1} \frac{n-b}n u_b$ but this is not necessarily more enlightening.

 Note that $T_g^{(u_1,u_2,\dots,u_{n-1})}$ is an element of $\SL$. By definition, $T_g^{(u_1,u_2,\ldots,u_{n-1})}$ is the \emph{elementary shearing map of amplitude $(u_1,u_2,\dots,\\u_{n-1})$ along the geodesic $g$}. 
 
 To motivate this definition, it is useful to consider the double ratio $D_a(E,F,G,H)\in \R$ of a suitably  generic quadruple of flags $(E,F,G,H)\\ \in \flag^4$, which is the crucial ingredient in the definition of the shearing coordinates associated to positive framed representations by Fock-Goncharov \cite{FoG} for punctured surfaces, as well as of the shearing cycle of the Bonahon-Dreyer parametrization of $\Hit_n(S)$ for closed surfaces \cite{BonDr1, BonDr2}. An easy computation and the definition yield the following property.

\begin{lem} 
If $E=\mathcal{F}_{\rho}({x}_-)$ and $F=\mathcal{F}_{\rho}({x}_+) \in \flag$ are the flags associated to the geodesic $g$, and if $(E,F,G,H) \in \flag^4$ is sufficiently generic that the double ratio $D_a(E,F,G,H)$ is defined, then
$$
  D_a(E,F,G,T_g^{(u_1,\dots ,u_{n-1})}H)= e^{u_a} D_a(E,F,G,H)
$$ 
for every $a=1$,  \dots, $n-1$. \qed
\end{lem}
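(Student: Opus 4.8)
The plan is to unwind the definition of the double ratio $D_a(E,F,G,H)$ and to exploit the fact that the elementary shearing map $T:=T_g^{(u_1,\dots,u_{n-1})}$ fixes the two flags $E$ and $F$ attached to $g$. Recall from \cite{FoG, BonDr1, BonDr2} that $D_a(E,F,G,H)$ is a ratio of four ``triple wedge products'', built from the four expressions $E^{(a)}\wedge F^{(n-a-1)}\wedge G^{(1)}$, $E^{(a)}\wedge F^{(n-a-1)}\wedge H^{(1)}$, $E^{(a-1)}\wedge F^{(n-a)}\wedge G^{(1)}$ and $E^{(a-1)}\wedge F^{(n-a)}\wedge H^{(1)}$, where each subspace is replaced by a wedge of basis vectors and the total $n$-fold wedge is read off in $\Lambda^n\R^n\cong\R$; the ratio is arranged so as not to depend on the choices of bases or of the identification $\Lambda^n\R^n\cong\R$, and $D_a(E,F,G,H)$ is ``defined'' precisely when the two denominator terms are nonzero. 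First I would record the elementary observation that, because $L_a=E^{(a)}\cap F^{(n-a+1)}$ and $E$, $F$ are transverse, one has $E^{(k)}=L_1\oplus\dots\oplus L_k$ and $F^{(k)}=L_{n-k+1}\oplus\dots\oplus L_n$ for every $k$; since $T$ respects the decomposition $\R^n=\bigoplus_a L_a$, it preserves every $E^{(k)}$ and every $F^{(k)}$, so that $T(E)=E$, $T(F)=F$, and in the expression for $D_a(E,F,G,TH)$ every factor coming from $E$, from $F$, or from the line $G^{(1)}$ is literally unchanged, only $H^{(1)}$ being replaced by $TH^{(1)}$.

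The heart of the argument is then the computation of how $T$ rescales the two terms that involve $H^{(1)}$. The sum $E^{(a)}\oplus F^{(n-a-1)}=\bigoplus_{b\neq a+1}L_b$ omits exactly the line $L_{a+1}$, so wedging an $(n-1)$-frame of it with any vector only detects the $L_{a+1}$-component of that vector; since $T$ multiplies this component by $e^{v_{a+1}}$, we get $E^{(a)}\wedge F^{(n-a-1)}\wedge (Th)=e^{v_{a+1}}\,E^{(a)}\wedge F^{(n-a-1)}\wedge h$ for any $h$ spanning $H^{(1)}$. Similarly $E^{(a-1)}\oplus F^{(n-a)}=\bigoplus_{b\neq a}L_b$ omits exactly $L_a$, whence $E^{(a-1)}\wedge F^{(n-a)}\wedge (Th)=e^{v_a}\,E^{(a-1)}\wedge F^{(n-a)}\wedge h$. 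In the ratio defining $D_a$ these two $H^{(1)}$-terms occur in opposite positions — one in the numerator, one in the denominator — so replacing $H$ by $TH$ multiplies $D_a$ by $e^{v_a}/e^{v_{a+1}}=e^{v_a-v_{a+1}}=e^{u_a}$, which is the asserted identity. The same computation shows along the way that $D_a(E,F,G,TH)$ is defined as soon as $D_a(E,F,G,H)$ is, since the two denominators differ only by the nonzero scalar $e^{v_{a+1}}$.

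I do not expect a genuine obstacle here; the only points that require care are bookkeeping ones. First, one must use the precise normalization of $D_a$ from \cite{BonDr1, BonDr2}, so that the two $H^{(1)}$-terms sit in opposite positions of the ratio and the exponent comes out as $+u_a$ rather than $-u_a$ — this is exactly the reason the amplitude of $T_g^{(u_1,\dots,u_{n-1})}$ is encoded through $u_a=v_a-v_{a+1}$. Second, one should note that each triple wedge product is independent of the chosen bases and of the volume form precisely because each of $E$, $F$, $G$, $H$ enters $D_a$ with balanced multiplicity, so that the scalars $e^{v_a}$ and $e^{v_{a+1}}$ are well-defined global multipliers and not merely defined up to the ambiguities present in the individual wedge products.
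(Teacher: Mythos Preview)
Your argument is correct and is precisely the ``easy computation'' the paper alludes to; the paper itself gives no proof beyond the sentence ``An easy computation and the definition yield the following property'' and the \qed\ symbol. Your identification $E^{(a)}\oplus F^{(n-a-1)}=\bigoplus_{b\neq a+1}L_b$ and $E^{(a-1)}\oplus F^{(n-a)}=\bigoplus_{b\neq a}L_b$, together with the observation that $T$ fixes each $L_b$ up to the scalar $e^{v_b}$, is exactly what is needed, and your caveat about the sign convention of $D_a$ is well placed.
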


By construction, the shearing map $T_g^{(u_1,u_2,\ldots,u_{n-1})} \in \SL$ is the identity when $(u_1,u_2,\ldots,u_{n-1})=(0,0, \dots, 0)$. Let us denote the \emph{infinitesimal $a$-shearing map along $g$} by $t_g^{(a)}$ which is the derivative of $T_g^{(u_1,u_2,\ldots,u_{n-1})}$ with respect to the $a$th coordinate $u_a$ at that point. Namely,
$$
t_g^{(a)} = {\textstyle\frac d{du}} T_g^{(0,\dots,0, u, 0, \dots, 0)} {}_{|u=0}
$$
as an element of the Lie algebra $\mathfrak{sl}_n(\mathbb{R})$, where the variable $u$ occurs in the $a$-th position in  $(0,\dots,0, u, 0, \dots, 0)$. In other words, $t_g^{(a)} $ is the linear map $\R^n \to \R^n$ that respects the line decomposition   $\mathbb{R}^n = \bigoplus_{b=1}^n L_b$ with $L_b=E^{(b)}\cap F^{(n-b+1)}$ and acts on $L_b$ by multiplication by  $\frac{n-a}n$ if $b\leq a$ and by $-\frac an$ if $b \geq a+1$. 
 

\subsection{Shearing a Hitchin representation} 
\label{subsect:ShearingDef}

Let $\lambda$ be a maximal geodesic lamination in $S$. 
The shearing deformation $\Sigma^\alpha \rho$ of the Hitchin character $\rho \in \Hit_n(S)$ is determined by a twisted $\R^{n-1}$-valued transverse cocycle $\alpha \in \twis$. 

To construct  $\Sigma^\alpha \rho$, we begin for describing the shearing between two components $P$, $Q$ of the complement $\widetilde S - \widetilde\lambda$ of the preimage $\widetilde \lambda$ of $\lambda$ in the universal cover $\widetilde S$.  Note that $P$ and $Q$ are ideal triangles.

Let $\mathcal{P}_{PQ}$ be the set of components of $\widetilde{S}-\widetilde{\lambda}$ between $P$ and $Q$,  let $k \subset \widetilde S$ be an oriented arc that goes from a point in the interior of $P$ to a point in the interior of $Q$, is transverse to $\widetilde\lambda$, and is non-backtracking in the sense that it meets each leaf of $\widetilde\lambda$ at most once. 

Let $\mathcal{P}=\{R_1,R_2,\ldots,R_m\}$ be a finite subset of $\mathcal{P}_{PQ}$, where $R_1$, $R_2$, \dots, $R_m$ occur in this order as one goes from $P$ to $Q$.  

For every ideal triangle $R_i\in \mathcal{P}$, let $g_i^-$ and $g_i^+\subset \widetilde{\lambda}$ be the two leaves bounding $R_i$ that are the closest to the triangles $P$ and $Q$, respectively. Orient the two geodesics $g_i^\pm$ to the left of the oriented arc $k$, and let $\alpha(k_i) \in \R^{n-1}$ be the vector associated by $\alpha \in \twis$ to the projection to $S$ of an arbitrary subarc $k_i$ of $k$ joining a point in the interior of $P$ to a point in the interior of $R_i$. We can associate to the oriented geodesics $g_i^-$ and $g_i^+$ the elementary shearing maps $T_{g_i^-}^{\alpha(k_i)}$ and $T_{g_i^+}^{-\alpha(k_i)} \in \SL$.

With the above definitions, we can then consider the linear map
\begin{multline*}
\varphi_\mathcal{P}^\alpha=T_{g_1^-}^{\alpha(k_1)}\circ T_{g_1^+}^{-\alpha(k_1)}\circ T_{g_2^-}^{\alpha(k_2)}\circ T_{g_2^+}^{-\alpha(k_2)}\circ \dots\\
\dots \circ T_{g_m^-}^{\alpha(k_m)}\circ T_{g_m^+}^{-\alpha(k_m)}\circ T_{g_Q^-}^{\alpha(k)}.
\end{multline*}

\begin{lem}
If $\alpha$ is sufficiently close to $0$ in $\twis$,  the above linear map $\varphi_{\mathcal P}^\alpha$ converges to a  map $\varphi_{PQ}^\alpha$ in $ \mathrm{SL}_n(\mathbb{R})$ as the finite subset $\mathcal P$ converges to the set $\mathcal{P}_{PQ}$ of all components of $\widetilde S - \widetilde \lambda$ separating $P$ from $Q$.  
\end{lem}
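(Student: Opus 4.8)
The plan is to view the finite subsets $\mathcal P\subset\mathcal P_{PQ}$, ordered by inclusion, as a directed set, and to prove that the net $\bigl(\varphi_{\mathcal P}^\alpha\bigr)$ is Cauchy for the operator norm on linear self-maps of $\R^n$. Its limit $\varphi_{PQ}^\alpha$ then automatically lies in $\SL$, because each $\varphi_{\mathcal P}^\alpha$ does (each elementary shearing map has determinant $\prod_{a=1}^{n}e^{v_a}=e^{\sum_a v_a}=1$) and $\SL$ is closed in the space of linear maps. Since $\mathcal P_{PQ}$ is countable, it suffices to treat ordinary sequential convergence along an arbitrary exhaustion $\mathcal P_1\subset\mathcal P_2\subset\cdots$ with union $\mathcal P_{PQ}$. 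For $R\in\mathcal P_{PQ}$ write $F_R=T_{g_R^-}^{\alpha(k_R)}\circ T_{g_R^+}^{-\alpha(k_R)}$, where $g_R^-$ and $g_R^+$ are the sides of the ideal triangle $R$ facing $P$ and $Q$ and $k_R$ is a subarc of $k$ from the interior of $P$ to the interior of $R$; then $\varphi_{\mathcal P}^\alpha=\bigl(\prod_{R\in\mathcal P}F_R\bigr)\circ T_{g_Q^-}^{\alpha(k)}$, the product being taken in the order in which the $R$ occur between $P$ and $Q$. The trailing factor $T_{g_Q^-}^{\alpha(k)}$ is fixed, so the whole question reduces to the absolute convergence of the infinite product $\prod_{R\in\mathcal P_{PQ}}F_R$, that is, to the estimate $\sum_{R\in\mathcal P_{PQ}}\bigl\|F_R-\mathrm{Id}\bigr\|<\infty$. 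Once this is known, the partial products are uniformly bounded, the elementary inequality $\|AF_RB-AB\|\le\|A\|\,\|F_R-\mathrm{Id}\|\,\|B\|$ shows that inserting the remaining factors changes the product by an arbitrarily small amount, and the limit is independent of the exhaustion.

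The content is therefore the estimate on $\|F_R-\mathrm{Id}\|$. The geometric point is that $g_R^-$ and $g_R^+$ are two sides of one ideal triangle, hence share an ideal endpoint; consequently the two line decompositions $\R^n=\bigoplus_a L_a$ used to define $T_{g_R^-}^{\alpha(k_R)}$ and $T_{g_R^+}^{\alpha(k_R)}$ are built from triples of flags assigned by $\mathcal F_\rho$, of which two agree. By the H\"older continuity of the flag curve and the transversality of the flags it produces (Proposition~\ref{prop:flag curve}), together with smoothness of the map sending a transverse pair of flags to the associated line decomposition $\R^n=\bigoplus_a L_a$, the two decompositions, and hence the two shearing maps of equal amplitude $\alpha(k_R)$, differ by at most $C\,\delta(R)^\eta$, where $\eta>0$ is a H\"older exponent of $\mathcal F_\rho$, $\delta(R)$ is the distance in $\partial_\infty\widetilde S$ between the two ideal endpoints of $R$ that are not shared by $g_R^-$ and $g_R^+$, and $C$ depends only on $\rho$ and on an upper bound for $|\alpha(k_R)|$. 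Since $T_{g_R^-}^{\alpha(k_R)}\circ T_{g_R^-}^{-\alpha(k_R)}=\mathrm{Id}$, this gives $\|F_R-\mathrm{Id}\|\le C'\,\delta(R)^\eta$. The amplitudes $|\alpha(k_R)|$ are bounded uniformly over $R\in\mathcal P_{PQ}$ by a quantity that tends to $0$ as $\alpha\to 0$ in $\twis$; this is the only place the hypothesis that $\alpha$ be close to $0$ is used, and it keeps all the maps in a fixed compact neighbourhood of the identity over which the constants above are uniform.

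It remains to check that $\sum_{R\in\mathcal P_{PQ}}\delta(R)^\eta<\infty$. The triangles $R\in\mathcal P_{PQ}$ are pairwise disjoint components of $\widetilde S-\widetilde\lambda$, all crossed by the single compact non-backtracking arc $k$; ordering the leaves of $\widetilde\lambda$ met by $k$, consecutive such leaves share an ideal endpoint while their other endpoints move monotonically along $\partial_\infty\widetilde S$, and a standard estimate for such stacked ideal triangles in negative curvature shows that the resulting gaps, hence the numbers $\delta(R)$, decay geometrically fast as $R$ ranges towards $P$ or towards $Q$. This is exactly the decay estimate underlying the well-definedness of Thurston's shear coordinates and, more generally, of transverse cocycles; see \cite{Bon1,Bon2}. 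Geometric decay is preserved under raising to the power $\eta>0$, so $\sum_R\|F_R-\mathrm{Id}\|<\infty$ and the product converges in $\SL$, proving the lemma. I expect the main obstacle to be precisely this last step: making rigorous the coupling between the geometric decay of the complementary triangles $\delta(R)$ and the H\"older regularity of $\mathcal F_\rho$, which is ultimately a manifestation of Labourie's Anosov property of Hitchin representations, and verifying that the uniform bound on the amplitudes $|\alpha(k_R)|$ genuinely holds across the infinitely many triangles separating $P$ from $Q$.
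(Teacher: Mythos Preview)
Your overall strategy---reduce to absolute convergence of the ordered infinite product $\prod_R F_R$ by bounding $\sum_R\|F_R-\mathrm{Id}\|$, using H\"older continuity of the flag curve against geometric decay of the gaps---is the right one, and it is the strategy carried out in the references the paper cites (\cite[Lemma~24]{Dr}, \cite[\S8.3]{BonDr2}; the paper itself gives no argument). However, there is a genuine gap at the point you yourself flag as the likely obstacle.

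Your claim that the amplitudes $|\alpha(k_R)|$ are \emph{uniformly} bounded over $R\in\mathcal P_{PQ}$ is false in general. A transverse cocycle is only a finitely additive signed object, not a measure, so $|\alpha(k')|$ for subarcs $k'\subset k$ is not controlled by $|\alpha(k)|$. The correct bound is the one recorded later in the paper as Lemma~\ref{lineargrowthgapheight}: $\|\alpha(k_R)\|\le r(R)\,\|\alpha\|_{\widehat\Phi}$, a \emph{linear} growth in the depth $r(R)$ of the gap. This matters twice. First, the elementary shearing map $T_g^{u}$ has operator norm of order $e^{c|u|}$, so $\|T_{g_R^\pm}^{\pm\alpha(k_R)}\|$ can be as large as $e^{c\,r(R)\|\alpha\|_{\widehat\Phi}}$; in particular the maps do \emph{not} stay in a fixed compact neighbourhood of the identity, and your constant $C'$ is not uniform. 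Second, the difference $T_{g_R^-}^{\alpha(k_R)}-T_{g_R^+}^{\alpha(k_R)}$ picks up the same exponential factor when you pass through the change of line decomposition. The honest estimate is therefore of the shape
\[
\|F_R-\mathrm{Id}\|\ \le\ C\,r(R)\,e^{c\,r(R)\|\alpha\|_{\widehat\Phi}}\,\delta(R)^{\eta},
\]
and since $\delta(R)$ decays like $e^{-A\,r(R)}$ (Lemma~\ref{expdecaygaplength}), the sum $\sum_R\|F_R-\mathrm{Id}\|$ is dominated by $\sum_r r\,e^{(c\|\alpha\|_{\widehat\Phi}-A\eta)r}$, which converges precisely when $\|\alpha\|_{\widehat\Phi}$ is small enough that $c\|\alpha\|_{\widehat\Phi}<A\eta$. \emph{That} competition---linear growth of the cocycle and exponential growth of the shearing norms against exponential decay of the gaps---is where the hypothesis ``$\alpha$ sufficiently close to $0$'' is actually used, not merely to keep constants uniform. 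Once you replace your uniform-boundedness claim by this growth-versus-decay balance, the rest of your argument goes through.
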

\begin{proof}
 See \cite[Lemma 24]{Dr} or \cite[\S8.3]{BonDr2}. 
\end{proof}

Fix a component $P_0$ of the complement $\widetilde S- \widetilde\lambda$. 

\begin{lem}
If $\alpha$ is sufficiently close to $0$ in $\twis$,  there exists a unique group homomorphism $\Sigma^\alpha \rho \colon \pi_1(S) \to \psl$ such that
$$
\Sigma^\alpha \rho (\gamma) = \varphi_{P_0(\gamma P_0)}^\alpha \, \rho(\gamma) \in \psl
$$
 for every $\gamma \in \pi_1(S)$. In addition, $\Sigma^\alpha \rho$ is a Hitchin representation. \end{lem}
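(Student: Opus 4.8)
The plan is to verify, in order, that the displayed formula defines a group homomorphism into $\psl$, that it is the unique such, and that the resulting representation lies in $\Hit_n(S)$. The crux is to establish two structural properties of the partial shearing maps $\varphi^\alpha_{PQ}$, valid for arbitrary components $P$, $Q$, $R$ of $\widetilde S-\widetilde\lambda$: the \emph{equivariance} $\rho(\gamma)\,\varphi^\alpha_{PQ}\,\rho(\gamma)^{-1}=\varphi^\alpha_{\gamma P,\gamma Q}$, and the \emph{chain relation} $\varphi^\alpha_{PR}=\varphi^\alpha_{PQ}\,\varphi^\alpha_{QR}$. Granting these, write $\psi_P=\varphi^\alpha_{P_0,P}$, so that $\Sigma^\alpha\rho(\gamma)=\psi_{\gamma P_0}\,\rho(\gamma)$, and compute
\begin{align*}
\Sigma^\alpha\rho(\gamma_1)\,\Sigma^\alpha\rho(\gamma_2)
&=\psi_{\gamma_1P_0}\,\bigl(\rho(\gamma_1)\,\psi_{\gamma_2P_0}\,\rho(\gamma_1)^{-1}\bigr)\,\rho(\gamma_1\gamma_2)\\
&=\psi_{\gamma_1P_0}\,\varphi^\alpha_{\gamma_1P_0,\gamma_1\gamma_2P_0}\,\rho(\gamma_1\gamma_2)\\
&=\varphi^\alpha_{P_0,\gamma_1\gamma_2P_0}\,\rho(\gamma_1\gamma_2)=\Sigma^\alpha\rho(\gamma_1\gamma_2),
\end{align*}
using equivariance for the second line and the chain relation for the triple $(P_0,\gamma_1P_0,\gamma_1\gamma_2P_0)$ for the third. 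Since each $\varphi^\alpha_{PQ}$ lies in $\SL$ and $\rho(\gamma)\in\psl$, the product lies in $\psl$ (after reduction modulo $\{\pm\mathrm{Id}\}$ when $n$ is even); and uniqueness is immediate because the formula already prescribes $\Sigma^\alpha\rho(\gamma)$ for every $\gamma$, so the real content of the statement is consistency.

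\textbf{The two structural properties.} Equivariance I would read off from the $\rho$-equivariance of the flag curve (Proposition~\ref{prop:flag curve}): it makes the line decomposition attached to $\gamma g$ the $\rho(\gamma)$-image of the one attached to $g$, hence $\rho(\gamma)\,T^u_g\,\rho(\gamma)^{-1}=T^u_{\gamma g}$ for elementary shearings; combining this with the $\pi_1(S)$-invariance of the transverse cocycle $\alpha$ (which is pulled back from $S$) and with continuity of the finite products $\varphi^\alpha_{\mathcal P}$ gives the equivariance of $\varphi^\alpha_{PQ}$ in the limit. For the chain relation I would pass to the median component $M$ of the triple $(P,Q,R)$ in the natural tree structure on the set of components of $\widetilde S-\widetilde\lambda$, which reduces everything to the ``collinear'' situation where one region lies between the other two; there the relation follows from the finite additivity of $\alpha$ together with the fact that elementary shearings along a fixed leaf commute and add amplitudes. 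The delicate instance inside this reduction is the degenerate case $P=R$, namely $\varphi^\alpha_{PQ}\,\varphi^\alpha_{QP}=\mathrm{Id}$: here reversing the orienting arc reverses the induced orientations of all separating leaves and, by the reversal axiom $\alpha(\overline k)=\overline{\alpha(k)}$, replaces each amplitude $\alpha(k_i)$ by $\overline{\alpha(k_i)}$, while reversing the orientation of a geodesic $g$ interchanges its two flags and reindexes the lines $L_a\mapsto L_{n+1-a}$; a short computation from the definition of $T^u_g$ then yields $T^{\overline u}_{\overline g}=(T^u_g)^{-1}$, which makes the two halves telescope. Carrying this through with the estimates needed to control the convergence of the defining infinite products is exactly the content of \cite[Lemma~24]{Dr} and \cite[\S8.3]{BonDr2}, which I would cite.

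\textbf{Hitchin property.} To see that $\Sigma^\alpha\rho$ is a Hitchin representation I would use a connectedness argument. After shrinking the neighbourhood of $0$ in $\twis$ to a star-shaped one, $\Sigma^{t\alpha}\rho$ is defined for every $t\in[0,1]$, the map $t\mapsto[\Sigma^{t\alpha}\rho]\in\mathcal{X}_{\psl}(S)$ is continuous (since $\alpha\mapsto\varphi^\alpha_{PQ}$ depends continuously on $\alpha$, as also follows from \cite{Dr, BonDr2}), and $\Sigma^0\rho=\rho$ because every elementary shearing is the identity when $\alpha=0$. Hence this path joins $\rho$ to $\Sigma^\alpha\rho$ inside $\mathcal{X}_{\psl}(S)$; as $\Hit_n(S)$ is a connected component of $\mathcal{X}_{\psl}(S)$, hence open and closed, and contains $\rho$, it contains the entire path, so $\Sigma^\alpha\rho\in\Hit_n(S)$.

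\textbf{Main obstacle.} The formal manipulation in the first step is routine. The real work lies in the chain relation $\varphi^\alpha_{PR}=\varphi^\alpha_{PQ}\,\varphi^\alpha_{QR}$ for arbitrary triples, which bundles together the combinatorics of how separating leaves compose, the cancellation identity $T^{\overline u}_{\overline g}=(T^u_g)^{-1}$, and the convergence estimates for the infinite products defining the maps $\varphi^\alpha_{PQ}$ — precisely the points already handled in \cite{Dr, BonDr2}.
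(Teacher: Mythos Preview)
Your proposal is correct and essentially unpacks the argument that the paper delegates entirely to the references \cite[Lemma~26]{Dr} and \cite[\S8.3]{BonDr2}: the homomorphism property via equivariance and the chain relation $\varphi^\alpha_{PQ}\varphi^\alpha_{QR}=\varphi^\alpha_{PR}$ (the latter recorded in the paper as \cite[Theorem~20]{Dr}), followed by a connectedness argument for Hitchin-ness. There is no substantive difference in approach; you simply provide the outline that the paper omits.
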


\begin{proof}
See \cite[Lemma 26]{Dr} and \cite[\S8.3]{BonDr2}. 
\end{proof}

By definition, $\Sigma^\alpha \rho \in \Hit_n(S)$ is obtained by \emph{shearing $\rho$ along the geodesic lamination $\lambda$ according to the twisted transverse cocycle $\alpha \in \twis$}. The group homomorphism $\Sigma^\alpha \rho \colon \pi_1(S) \to \psl$ depends on the choice of the base component $P_0$ of $\widetilde S- \widetilde \lambda$ but the Hitchin character $\Sigma^\alpha \rho \in \Hit_n(S)$ that it represents does not. 

In the Bonahon-Dreyer parametrization \cite{BonDr2} of $\Hit_n(S)$ associated to the maximal geodesic lamination $\lambda$, a Hitchin representation $\rho$ is parametrized by certain triangle invariants $\tau_{abc}^\rho (s) \in \R$ and by a shearing relative cocycle $\sigma^\rho \in \mathcal Z_{tw}(\lambda, \mathrm{slits};  \R^{n-1})$. It follows from the definitions that the Hitchin character $\rho'=\Sigma^\alpha \rho$ obtained by shearing $\rho \in \Hit_n(S)$ according to $\alpha \in \twis \subset  \mathcal Z_{tw}(\lambda, \mathrm{slits};  \R^{n-1})$ has the same triangle invariants $\tau_{abc}^{\rho'}(s) = \tau_{abc}^\rho (s)$ as $\rho$ but that its shearing cocycle is equal to $\sigma^{\rho'} = \sigma^\rho + \alpha$.

\subsection{Infinitesimal shearing of a Hitchin representation}
\label{sect: infinitesimal shearing}

The above construction associates a tangent vector
$$
U_\alpha = {\textstyle \frac{d}{dt}} \Sigma^{t\alpha} \rho_{|t=0} \in T_\rho \Hit_n(S)
$$
to each Hitchin character $\rho \in \Hit_n(S)$ and each twisted transverse cocycle $\alpha \in \twis$. By definition,  the tangent vector $U_\alpha \in T_\rho \Hit_n(S)$ is the \emph{infinitesimal shearing vector} of $\rho \in \Hit_n(S)$ along the geodesic lamination $\lambda$ according to the twisted transverse cocycle $\alpha \in \twis$. 

The standard deformation theory of representations  \cite{We2} identifies the tangent space $T_{\rho}\mathrm{Hit}_n(S)$ to $H^1(S;\mathfrak{sl}_n(\mathbb{R})_{\mathrm{Ad}_{\rho}})$ the first cohomology space   of the surface $S$ with coefficients in the Lie algebra $\mathfrak{sl}_n(\mathbb{R})$ of $\mathrm{PSL}_n(\mathbb{R})$ twisted by the adjoint representation $\mathrm{Ad}_{\rho}$ from $ \pi_1(S)$ to $ \mathrm{Aut}(\mathfrak{sl}_n(\mathbb{R}))$. 

Before describing the cohomology class of $H^1(S;\mathfrak{sl}_n(\mathbb{R})_{\mathrm{Ad}_{\rho}})$ corresponding to the tangent vector $U_\alpha \in T_\rho \Hit_n(S)$, we recall the definition of this cohomology space. First of all, the adjoint representation is defined by the property that for every  $\gamma \in \pi_1(S)$, the automorphism $\mathrm{Ad}_\rho (\gamma) \colon 
\mathfrak{sl}_n(\mathbb{R}) \to \mathfrak{sl}_n(\mathbb{R})$ is the matrix conjugation automorphism $u\mapsto \rho(\gamma)u\rho(\gamma)^{-1}$ induced by $\rho(\gamma) \in \psl$. We then consider the vector space
$
C^k(S;\mathfrak{sl}_n(\mathbb{R})_{\mathrm{Ad}_{\rho}})$
consisting of all $\pi_1(S)$-equivariant group homomorphisms $C_k(\widetilde{S};\mathbb{Z})\rightarrow \mathfrak{sl}_n(\mathbb{R})$, where $\pi_1(S)$ acts on the space $C_k(\widetilde{S};\mathbb{Z})$ of $k$-chains by its action on the universal cover $\widetilde S$ and acts on $\mathfrak{sl}_n(\mathbb{R})$ by the adjoint representation $\mathrm{Ad}_\rho$. The boundary maps of the chain complex $C_*(\widetilde{S};\mathbb{Z})$ then induce coboundary maps turning the family of these vector spaces into an ascending chain complex $C^*(S;\mathfrak{sl}_n(\mathbb{R})_{\mathrm{Ad}_{\rho}})$. The cohomology space $H^1(S;\mathfrak{sl}_n(\mathbb{R})_{\mathrm{Ad}_{\rho}})$ is the first homology space of this chain complex. 

The above discussion is independent of the type of (co)homology that we are considering for the universal cover $\widetilde S$. For our purposes, it will be convenient to restrict $C_1(\widetilde{S};\mathbb{Z})$ to 1-chains that are linear combinations of oriented arcs transverse to the geodesic lamination $\widetilde\lambda$, and $C_2(\widetilde{S};\mathbb{Z})$ to 2-chains whose boundary is of the above type. The standard uniqueness theorems for homology and cohomology theories show that 
this framework can be used to compute $H^1(S;\mathfrak{sl}_n(\mathbb{R})_{\mathrm{Ad}_{\rho}})$. 

We also note that that Weil's original construction \cite{We2} of the isomorphism $ T_\rho \Hit_n(S)\cong H^1(S;\mathfrak{sl}_n(\mathbb{R})_{\mathrm{Ad}_{\rho}})$ passes through a similarly defined group cohomology space $H^1(\pi_1(S);\mathfrak{sl}_n(\mathbb{R})_{\mathrm{Ad}_{\rho}})$, which is isomorphic to $ H^1(S;\mathfrak{sl}_n(\mathbb{R})_{\mathrm{Ad}_{\rho}})$ as $\widetilde S$ is contractible.

\begin{lem}[Gap Formula]
\label{gap formula}
If $U_\alpha \in T_{\rho}\mathrm{Hit}_n(S)$ is the infinitesimal shearing vector along the geodesic lamination $\lambda$ according to the twisted transverse cocycle $\alpha \in \twis$, then the cohomology class of $H^1(S;\mathfrak{sl}_n(\mathbb{R})_{\mathrm{Ad}_{\rho}})$ corresponding to $U_\alpha$ can be realized by the closed cochain  $u_\alpha \in C^1(S; \mathfrak{sl}_n(\mathbb{R})_{\mathrm{Ad}_{\rho}})$ such that for every oriented arc ${k}$ transverse to $\widetilde{\lambda}$
\begin{equation*}
u_\alpha({k})=
\sum_{a=1}^{n-1} \alpha^{(a)}({k})t_{g_{d_+}^{-}}^{(a)} 
+ \sum_{a=1}^{n-1} \sum_{d\neq d_+,d_-} \alpha^{(a)} ( {k_d}) \big( t_{g_d^{-}}^{(a)}-t_{g_d^{+}}^{(a)} \big), 
\end{equation*}
where: 
\begin{itemize}
\item for every subarc $k'$ of $k$, $\alpha^{(a)}(k')\in \R$ is the $a$th coordinate of the vector $\alpha(k) \in \R^{n-1}$ associated by $\alpha$ to the projection of $k'$ to $S$;
\item $t_g^{(a)} \in \mathfrak{sl}_n(\mathbb{R})$ is the infinitesimal $a$th shearing map along the oriented geodesic $g$;
\item the last sum is over all components d of ${k}-\widetilde{\lambda}$ which are distinct from the components $d_+$ and $d_-$ respectively containing the positive and the negative end points of ${k}$;
\item ${k_d}$ is a subarc of ${k}$ that joins the negative end point of ${k}$ to an arbitrary point in $d$;
\item $g_d^+$ and $g_d^-$ are the leaves of $\widetilde{\lambda}$ respectively passing through the positive and negative end points of $d$ and are oriented to the left of $\widetilde{k}$.
\end{itemize}
\end{lem}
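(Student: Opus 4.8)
The plan is to identify the cohomology class of $U_\alpha$ with a group $1$-cocycle coming from Weil's deformation theory, and then to compute that cocycle from the explicit product formula for the shearing maps $\varphi^\alpha_{PQ}$. Recall that a smooth path $t\mapsto\rho_t$ of representations with $\rho_0=\rho$ has tangent vector represented by the group $1$-cocycle $\gamma\mapsto\bigl(\frac{d}{dt}\rho_t(\gamma)\bigr)_{|t=0}\,\rho(\gamma)^{-1}\in\mathfrak{sl}_n(\mathbb{R})$, and that under the isomorphism $H^1(\pi_1(S);\ad)\cong H^1(S;\ad)$ this class is carried by any cochain $u\in C^1(S;\ad)$ with $u(k)$ equal to that value whenever $k$ is a transverse arc running from the fixed base component $P_0$ of $\widetilde S-\widetilde\lambda$ to $\gamma P_0$. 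Applying this to $\rho_t=\Sigma^{t\alpha}\rho$, using $\Sigma^{t\alpha}\rho(\gamma)=\varphi^{t\alpha}_{P_0(\gamma P_0)}\,\rho(\gamma)$ together with $\varphi^{0}_{PQ}=\mathrm{Id}$, the cocycle value collapses to $\frac{d}{dt}\varphi^{t\alpha}_{P_0(\gamma P_0)}{}_{|t=0}$. Hence it suffices to prove that for every non-backtracking arc $k$ transverse to $\widetilde\lambda$ from the interior of a component $P$ to the interior of a component $Q$ one has $\frac{d}{dt}\varphi^{t\alpha}_{PQ}{}_{|t=0}=u_\alpha(k)$, with $u_\alpha$ given by the displayed formula; the case of a general transverse arc then follows by homotopy invariance of $\alpha$, and specializing to arcs from $P_0$ to $\gamma P_0$ recovers the class of $U_\alpha$.

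Before that I would check that the right-hand side of the stated formula really defines a well-defined, $\pi_1(S)$-equivariant, closed cochain. Independence of the auxiliary subarcs $k_d$ and of the non-backtracking representative of $k$ follows from the homotopy invariance and finite additivity of $\alpha$ (a subarc lying inside a single complementary region carries the zero value). Equivariance follows from the equivariance $t^{(a)}_{\gamma g}=\mathrm{Ad}_{\rho(\gamma)}t^{(a)}_{g}$ of the infinitesimal shearing maps, which is immediate from the $\rho$-equivariance of the flag curve and hence of the line decompositions $\mathbb{R}^n=\bigoplus_b L_b$. Closedness amounts to the cocycle identity $u_\alpha(k_1)+u_\alpha(k_2)=u_\alpha(k_1k_2)$ for concatenable transverse arcs; I would deduce it from the finite additivity of $\alpha$ by a telescoping computation, using that two consecutive components of $k-\widetilde\lambda$ share a boundary leaf, so that the sums $\sum_d\bigl(t^{(a)}_{g^-_d}-t^{(a)}_{g^+_d}\bigr)$ telescope in exactly the way needed to match the bookkeeping of the endpoints.

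The core computation is then short. Writing $\varphi^{t\alpha}_{\mathcal P}$ as the finite product of the factors $T^{t\alpha(k_i)}_{g_i^-}\circ T^{-t\alpha(k_i)}_{g_i^+}$ over the components $R_i\in\mathcal P$, followed by the last factor $T^{t\alpha(k)}_{g_Q^-}$, I observe that every one of these factors equals the identity at $t=0$; therefore $\frac{d}{dt}{}_{|t=0}$ of the product is the \emph{sum} of the derivatives of the factors, independently of their order. Each factor is differentiated from the definition of the infinitesimal shearing maps: since the amplitude $(u_1,\dots,u_{n-1})$ enters the exponents $v_b$ linearly, one gets $\frac{d}{dt}T^{tv}_{g}{}_{|t=0}=\sum_{a=1}^{n-1}v_a\,t^{(a)}_g$ for $v\in\R^{n-1}$. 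This yields $\frac{d}{dt}\varphi^{t\alpha}_{\mathcal P}{}_{|t=0}=\sum_i\sum_a\alpha^{(a)}(k_i)\bigl(t^{(a)}_{g_i^-}-t^{(a)}_{g_i^+}\bigr)+\sum_a\alpha^{(a)}(k)\,t^{(a)}_{g_Q^-}$. Letting $\mathcal P$ exhaust $\mathcal P_{PQ}$, and identifying notation — the leaf $g_Q^-$ bounding $Q$ on the side of $P$ is the leaf $g^-_{d_+}$ at the negative end of the component $d_+\ni$ (positive endpoint of $k$), while $\alpha^{(a)}(k)=\alpha^{(a)}(k_{d_+})$ because the two arcs differ only inside the region $d_+$ — produces precisely the formula for $u_\alpha(k)$, with the index $i$ now running over all components $d\neq d_\pm$.

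The one genuinely delicate point is analytic, not algebraic: one must justify interchanging $\frac{d}{dt}{}_{|t=0}$ with the limit $\mathcal P\to\mathcal P_{PQ}$, equivalently that the (a priori infinite) series $\sum_{d}\sum_a\alpha^{(a)}(k_d)\bigl(t^{(a)}_{g^-_d}-t^{(a)}_{g^+_d}\bigr)$ converges and that termwise differentiation of the product $\varphi^{t\alpha}_{\mathcal P}$ is valid uniformly for $t$ near $0$. This is exactly the convergence underlying the very definition of $\varphi^\alpha_{PQ}$ and of $\Sigma^\alpha\rho$, which rests on the boundedness of transverse cocycles combined with the exponential contraction furnished by the Anosov property of Hitchin representations; I would invoke the estimates of Dreyer \cite{Dr} and \cite[\S8]{BonDr2}, applied with the extra parameter $t$, to legitimize the interchange. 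Everything else reduces to care with the orientation conventions — all boundary leaves $g^\pm_d$ are oriented to the left of $\widetilde k$, matching the convention built into the definition of $\varphi^\alpha_{\mathcal P}$ — and so causes no sign trouble.
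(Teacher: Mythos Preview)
Your proposal is correct and follows essentially the same approach as the paper: both identify $u_\alpha(k)$ with $\bigl(\frac{d}{dt}\varphi^{t\alpha}_{PQ}\bigr)_{|t=0}$ via Weil's deformation theory, differentiate the finite product $\varphi^{t\alpha}_{\mathcal P}$ term by term using that each factor is the identity at $t=0$, and appeal to the convergence estimates underlying the construction of $\varphi^\alpha_{PQ}$ for the analytic justification. The only cosmetic difference is that the paper obtains closedness by differentiating the composition identity $\varphi^{t\alpha}_{PQ}\circ\varphi^{t\alpha}_{QR}=\varphi^{t\alpha}_{PR}$ (citing \cite[Theorem~20]{Dr}) rather than via your direct telescoping argument on the formula, but these are equivalent.
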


\begin{proof} Note that the formula usually involves infinitely many components $d$ of $k - \widetilde \lambda$. The convergence of the sum is proved by arguments similar to the ones in \cite[Section 5]{Bon1}; compare also the Gap Formula in \cite[Theorem 10]{Bon2}, as well as our proof of Lemma~\ref{gapapp} below.

To explain what the formula represents, let $P$ and $Q$ be the components of $\widetilde S - \widetilde\lambda$ that respectively contain the negative and positive endpoints of $k$. Then, considering the definition of the shearing map $\varphi^{t\alpha}_{PQ}\in \SL$ used to define the shearing deformation $\Sigma^{t\alpha} \rho \in \Hit_n(S)$ in \S \ref{subsect:ShearingDef} and differentiating in $t$, we see that 
$$
u_\alpha({k}) =\left( {\textstyle \frac d{dt}}  \varphi^{t\alpha}_{PQ} \right)_{|t=0}. 
$$

The fact that the cochain $u_\alpha \in C^1(S,\mathfrak{sl}_n(\mathbb{R})_{\mathrm{Ad}_{\rho}})$ is closed then comes from the property that $ \varphi^{t\alpha}_{PQ} \circ  \varphi^{t\alpha}_{QR} =  \varphi^{t\alpha}_{PR}$ for any three components $P$, $Q$, $R$ of $\widetilde S- \widetilde\lambda$, which is a key feature of the maps $\varphi^{t\alpha}_{PQ}  \in \mathfrak{sl}_n(\mathbb{R})$ proved in \cite[Theorem~20]{Dr}. 

By definition of the isomorphism $T_{\rho}\mathrm{Hit}_n(S) \cong H^1(\pi_1(S);\mathfrak{sl}_n(\mathbb{R})_{\mathrm{Ad}_{\rho}})$ in \cite{We2}, the group cohomology class  corresponding to $U_\alpha \in T_{\rho}\mathrm{Hit}_n(S)$ is defined by the group cocycle $u \colon \pi_1(S) \to \mathfrak{sl}_n(\mathbb{R})$ that associates to each $\gamma \in \pi_1(S)$ the derivative 
$$
u(\gamma) = \left( {\textstyle \frac d{dt}}  \big(\Sigma^{t\alpha}\rho(\gamma)\big) \, \rho(\gamma)^{-1} \right)_{|t=0} = 
\left( {\textstyle \frac d{dt}}  \varphi^{t\alpha}_{P_0(\gamma P_0) } \right)_{|t=0} = u_\alpha(k_\gamma),
$$
where $k_\gamma$ is an arbitrary arc transverse to $\widetilde \lambda$ and going from $P_0$ to $\gamma P_0$. It follows that the isomorphism $H^1(\pi_1(S);\mathfrak{sl}_n(\mathbb{R})_{\mathrm{Ad}_{\rho}}) \cong H^1(S;\mathfrak{sl}_n(\mathbb{R})_{\mathrm{Ad}_{\rho}})$ sends this group cohomology class $[u]$ to the cohomology class $[u_\alpha] \in H^1(S;\mathfrak{sl}_n(\mathbb{R})_{\mathrm{Ad}_{\rho}})$ represented by the closed cochain  $u_\alpha$. 
\end{proof}

\section{The Atiyah-Bott-Goldman symplectic pairing of two infinitesimal shearings}

\subsection{The Atiyah-Bott-Goldman symplectic pairing}
\label{sect: ABG}

The Cartan-Killing bilinear form $B\colon \mathfrak{sl}_n(\mathbb{R})\times \mathfrak{sl}_n(\mathbb{R}) \rightarrow \mathbb{R}$ is defined as $B(u,v)=2n \mathrm{Tr}(uv)$. It is preserved by the adjoint representation and therefore enables us to define a cup product
\begin{equation}
\label{eqcupprod}
\smile \colon  C^1(S;\mathfrak{sl}_n(\mathbb{R})_{\mathrm{Ad}_{\rho}})\times C^1(S;\mathfrak{sl}_n(\mathbb{R})_{\mathrm{Ad}_{\rho}})\rightarrow C^2(S,\mathbb{R}),
\end{equation}
which induces an antisymmetric bilinear form
\[
\omega \colon
H^1(S;\mathfrak{sl}_n(\mathbb{R})_{\mathrm{Ad}_{\rho}})\times H^1(S;\mathfrak{sl}_n(\mathbb{R})_{\mathrm{Ad}_{\rho}})\rightarrow H^2(S;\mathbb{R})\cong \mathbb{R}\]
where the isomorphism $H^2(S,\mathbb{R})\cong \mathbb{R}$ is defined by evaluation on the fundamental class of the oriented surface $S$.

Goldman \cite{Go} showed (under a higher level of generality) that for the isomorphism $T_{\rho}\mathrm{Hit}_n(S) \cong H^1(S;\mathfrak{sl}_n(\mathbb{R})_{\mathrm{Ad}_{\rho}})$, this form $\omega$ defines a symplectic form on $\Hit_n(S)$ now known as the \emph{Atiyah-Bott-Goldman symplectic form}. He also showed that in the case where $n=2$, this symplectic form is a constant multiple of  the Weil-Petersson symplectic form of the Teichm\"uller space $\mathcal{T}(S)= \Hit_2(S)$.

Our goal is to compute the pairing $\omega(U_{\alpha_1}, U_{\alpha_2})$ of the two vectors $U_{\alpha_1}$, $U_{\alpha_2} \in T_\rho \Hit_n(S)$  associated to the infinitesimal shearing of $\rho \in \Hit_n(S)$ according to twisted transverse cocycles $\alpha_1$, $\alpha_2 \in \twis$ for the geodesic lamination $\lambda$.

Let $\Phi$ be a  train track carrying the maximal geodesic lamination $\lambda$, and let $\widehat\Phi$ be its orientation cover as in \S \ref{sect: twisted cocycles}. 

At each switch $s$ of $\widehat \Phi$, there is a single incoming edge $e^{\mathrm{in}}_s$ on one side of $s$ and two outgoing edges $e^{\mathrm{left}}_s$ and $e^{\mathrm{right}}_s$ on the other side, with  $e^{\mathrm{left}}_s$ and $e^{\mathrm{right}}_s$ respectively diverging to the left and to the right as seen from the incoming edge $e^{\mathrm{in}}_s$ and for the orientation of S. Let $k^{\mathrm{left}}_s$ and $k^{\mathrm{right}}_s$ be the respective intersections of the tie $s$ with $e^{\mathrm{left}}_s$ and $e^{\mathrm{right}}_s$. Lift $s$ to an arc $\widetilde s$ in the universal cover $\widetilde S$, and let $\widetilde k^{\mathrm{left}}_s$ and $\widetilde k^{\mathrm{right}}_s$ be the corresponding lifts of $k^{\mathrm{left}}_s$ and $k^{\mathrm{right}}_s$. 

By construction, the two arcs $\widetilde k^{\mathrm{left}}_s$ and $\widetilde k^{\mathrm{right}}_s$ are transverse to the preimage $\widetilde \lambda$ of $\widetilde S$ and oriented by the canonical orientation of the ties of $\widehat \Phi$. The cocycles $u_{\alpha_1}$ and $u_{\alpha_2}$ defined by Lemma~\ref{gap formula} then provide $u_{\alpha_1}(\widetilde k^{\mathrm{right}}_s)$ and $u_{\alpha_2}(\widetilde k^{\mathrm{right}}_s) \in \mathfrak{sl}_n(\mathbb{R})$. These elements of $\mathfrak{sl}_n(\mathbb{R})$ depend on the lift $\widetilde s$ of the tie $s$ but their pairing $B \big( u_{\alpha_1}(\widetilde k^{\mathrm{right}}_s), u_{\alpha_2}(\widetilde k^{\mathrm{right}}_s) \big) $ does not, by invariance of the Killing form under the adjoint representation. We will consequently write 
$$
B \big( u_{\alpha_1}( e^{\mathrm{right}}_s), u_{\alpha_2}( e^{\mathrm{right}}_s) \big)
=
B \big( u_{\alpha_1}(\widetilde k^{\mathrm{right}}_s), u_{\alpha_2}(\widetilde k^{\mathrm{right}}_s) \big) \in \R. 
$$

We borrow our first step from \cite{SozBon}. 

\begin{prop}
\label{ABGform}
If $\alpha_1$, $\alpha_2 \in \twis$, the Atiyah-Bott-Goldman pairing $\omega(U_{\alpha_1}, U_{\alpha_2})$ of the corresponding infinitesimal shearing vectors $U_{\alpha_1}$, $U_{\alpha_2} \in T_\rho \Hit_n(S)$ is equal to 
\[
{\frac{1}{2}}\sum_s
\Big(
B \big( u_{\alpha_1}( e^{\mathrm{right}}_s), u_{\alpha_2}( e^{\mathrm{left}}_s) \big)
-B \big( u_{\alpha_1}( e^{\mathrm{left}}_s), u_{\alpha_2}( e^{\mathrm{right}}_s) \big)
\Big),
\]
where in the sum $s$ ranges over all switches of the orientation cover $\widehat{\Phi}$ of the train track $\Phi$.
\end{prop}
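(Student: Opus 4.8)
The plan is to follow the strategy of S\"ozen--Bonahon \cite{SozBon}: express $\omega(U_{\alpha_1},U_{\alpha_2})$ as the evaluation on the fundamental class $[S]$ of the cup product of the two explicit cocycles $u_{\alpha_1}$, $u_{\alpha_2}$ furnished by the Gap Formula (Lemma~\ref{gap formula}), and then read off that number as a twisted algebraic intersection number along the train track, which localizes at the switches of $\widehat\Phi$.

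First I would isolate two elementary facts about the cochains $u_{\alpha_i}\in C^1(S;\mathfrak{sl}_n(\R)_{\mathrm{Ad}_\rho})$. (i) If an oriented arc $k$ is disjoint from $\widetilde\lambda$, then $u_{\alpha_i}(k)=0$, since $k$ is homotopic rel endpoints to a point inside a complementary ideal triangle of $\widetilde S-\widetilde\lambda$, so that every term of the Gap Formula vanishes; in particular $[u_{\alpha_i}]\in H^1(S;\mathfrak{sl}_n(\R)_{\mathrm{Ad}_\rho})$ is supported in $\Phi$. (ii) At each switch $s$ of $\widehat\Phi$ the tie $s$ decomposes, along the point where $e^{\mathrm{in}}_s$ splits, into the two sub-ties $\widetilde k^{\mathrm{left}}_s$ and $\widetilde k^{\mathrm{right}}_s$ carrying the canonical orientation of the ties, so finite additivity of $u_{\alpha_i}$ gives the \emph{switch relation} $u_{\alpha_i}(e^{\mathrm{in}}_s)=u_{\alpha_i}(e^{\mathrm{left}}_s)+u_{\alpha_i}(e^{\mathrm{right}}_s)$ in $\mathfrak{sl}_n(\R)$. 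By (i) and Poincar\'e--Lefschetz duality for the local system $\mathfrak{sl}_n(\R)_{\mathrm{Ad}_\rho}$, the class $[u_{\alpha_i}]$ is dual to a twisted $1$-cycle $V_{\alpha_i}$ carried by $\Phi\subset S$; collapsing the ties of $\Phi$ to points, $V_{\alpha_i}$ is described by assigning to each edge the coefficient $u_{\alpha_i}(e)\in\mathfrak{sl}_n(\R)$ — a well-defined section of the local system once the edge is oriented to the left of its ties, as in the proof of Lemma~\ref{lem:TransverseCocycleDefinesHomology} — and fact (ii) is precisely the condition for this chain to be closed. Passing to the orientation cover $\widehat\Phi$, on which $u_{\alpha_i}(\iota(e))=-\,u_{\alpha_i}(e)$ and $\iota$ reverses the orientation of the edges, makes the bookkeeping of these orientations canonical.

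Then the definition of the Atiyah--Bott--Goldman form recalled in \S\ref{sect: ABG} gives $\omega(U_{\alpha_1},U_{\alpha_2})=\langle[u_{\alpha_1}]\smile[u_{\alpha_2}],[S]\rangle$, and under the duality above this equals the twisted algebraic intersection number $V_{\alpha_1}\cdot V_{\alpha_2}$ in $S$ of the two cycles, the $\mathfrak{sl}_n(\R)$-valued coefficients meeting at an intersection point being contracted by the Cartan--Killing form $B$; invariance of $B$ under $\mathrm{Ad}_\rho$ makes the result a genuine real number, matching the notation $B\big(u_{\alpha_1}(e^{\mathrm{right}}_s),u_{\alpha_2}(e^{\mathrm{left}}_s)\big)$ fixed just before the statement. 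To evaluate $V_{\alpha_1}\cdot V_{\alpha_2}$ I would run the argument in the proof of Lemma~\ref{lem:ComputeIntersectionEdgeWeightSystems} verbatim, with $B(\,\cdot\,,\,\cdot\,)$ in place of ordinary multiplication: push one cycle off the other in the direction of the ties, so that the intersection points become the switches of $\widehat\Phi$, a left-diverging switch contributing $B\big(u_{\alpha_1}(e^{\mathrm{right}}_s),u_{\alpha_2}(e^{\mathrm{left}}_s)\big)$ and a right-diverging one $-\,B\big(u_{\alpha_1}(e^{\mathrm{left}}_s),u_{\alpha_2}(e^{\mathrm{right}}_s)\big)$; symmetrizing these via $V_{\alpha_1}\cdot V_{\alpha_2}=\frac12\big(V_{\alpha_1}\cdot V_{\alpha_2}-V_{\alpha_2}\cdot V_{\alpha_1}\big)$ together with the antisymmetry of $B$ then assembles the single sum over all switches of $\widehat\Phi$ with the overall factor $\frac12$.

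The step that will demand the most care is the identification $\langle[u_{\alpha_1}]\smile[u_{\alpha_2}],[S]\rangle=V_{\alpha_1}\cdot V_{\alpha_2}$: to make it rigorous one fixes a chain-level model in which $[S]$ is represented adapted to $\Phi$ — for instance a cell decomposition of $S$ having $\Phi$ as a subcomplex, whose one-cells crossing the lamination are genuinely transverse to $\widetilde\lambda$ so that the Gap-Formula cochains can be evaluated on them — and then checks, using fact (i), that every cell contained in a complementary region of $\Phi$ contributes $0$ to $u_{\alpha_1}\smile u_{\alpha_2}$, while the cells meeting $\Phi$ reassemble into exactly the twisted intersection pairing. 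The bookkeeping of orientations here — of $S$, of the ties, of the left/right convention at switches, and of the two-to-one correspondence between switches of $\widehat\Phi$ and switches of $\Phi$ — is the genuinely delicate part, and is where the numerical constant $\frac12$ must be confirmed. By contrast, the infinite sums appearing in the Gap Formula cause no difficulty here: their convergence is already part of Lemma~\ref{gap formula}, and only finitely many well-defined cochain values $u_{\alpha_i}(e)$ enter the final computation.
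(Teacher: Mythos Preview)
Your plan is essentially the same as the paper's, which simply defers to the cup-product computation of \cite[Lemma~3]{SozBon} carried out in an explicit triangulation adapted to the train track; your Poincar\'e-duality framing and the push-off argument borrowed from Lemma~\ref{lem:ComputeIntersectionEdgeWeightSystems} amount to the same computation once you fix the cell decomposition you describe in the last paragraph.

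One slip to correct: in the final symmetrization you invoke the ``antisymmetry of $B$'', but the Cartan--Killing form $B$ is \emph{symmetric}. What you actually need is the antisymmetry of the algebraic intersection pairing $V_{\alpha_1}\cdot V_{\alpha_2}=-\,V_{\alpha_2}\cdot V_{\alpha_1}$ together with the \emph{symmetry} $B(x,y)=B(y,x)$; these combine to turn the two partial sums over left- and right-diverging switches into the single sum over all switches with the factor $\tfrac12$. The computation you outline is correct once this is said properly.
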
  
\begin{proof}
The proof follows from an immediate adaptation to the current framework of the computation of Lemma 3 in \cite{SozBon}, which uses an explicit triangulation of the surface $S$ to determine the cup-product of the cohomology classes $[u_{\alpha_1}]$, $[u_{\alpha_2}] \in  H^1(S;\mathfrak{sl}_n(\mathbb{R})_{\mathrm{Ad}_{\rho}})$.
\end{proof}

\subsection{Topological, geometric and combinatorial estimates}
Theorems~\ref{thm:MainThm1} and  \ref{thm:MainThm2} are obtained from Proposition~\ref{ABGform} by a limiting process that is very similar to the one used in \cite{SozBon}. It uses relatively classical geometric estimates that we indicate here. 

Let $\Phi$ be a train track carrying the geodesic lamination $\lambda$,  let $k$ be a generic tie of $\Phi$, and let $d$ be a component of $k-\lambda$. In the universal cover $\widetilde{S}$ of $S$, let $\widetilde{\Phi}$, $\widetilde{\lambda}$ be the preimages of $\Phi$ and $\lambda$, and let $\widetilde{d}$ be a lift of $d$. If $d$ does not contain any of the endpoints of $k$, let $g_{\widetilde d}^+$ and $g_{\widetilde d}^-$ be the two leaves of $\widetilde{\lambda}$ passing through the endpoints of $\widetilde d$. The \emph{divergence radius} or \emph{depth} $r(d)$  of $d$ is the largest integer $r\geq 1$ such that $g_{\widetilde d}^+$ and $g_{\widetilde d}^-$ successively cross the same sequence of edges $e_{-r+1}$, $e_{-r+2}$, \dots, $e_0$, \dots, $e_{r-2}$, $e_{r-1}$ of $\widetilde{\Phi}$ where $e_0$ is the edge of $\widetilde{\Phi}$ containing $\widetilde{d}$.   Note that $r(d)$ is independent of the choice of the lift $\widetilde{d}$ of $d$. By convention, $r(d)=1$ if $d$ contains one of the endpoints of the tie $k$.

We now  state three lemmas whose proof can be found in \cite{Bon1}. As usual, we fix an arbitrary negatively curved metric $m$ on the closed surface S. Let $k$ be a generic tie of the train track $\Phi$ carrying the geodesic lamination $\lambda$.    

\begin{lem}[{\cite[Lemma 4]{Bon1}}]\label{boundednumberofgaps}
There is a universal constant $C$, depending only on the topology of $S$, such that for every $r\geq 0$  the number of components $d$ of $k-\lambda$ with $r(d)=r$ is uniformly bounded by $C$.  \qed
\end{lem}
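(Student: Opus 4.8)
The plan is to pass to the universal cover, follow the two boundary leaves of a gap as they fellow-travel through $\widetilde\Phi$, and use the combinatorics of the train track to set up an injection from the gaps of divergence radius exactly $r$ into a combinatorial index set whose size is controlled only by the number of edges of $\Phi$ — which, after choosing $\Phi$ economically, depends only on the topology of $S$.

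First I would fix the edge of $\Phi$ containing the generic tie $k$ and a lift $\widetilde k$ of $k$, lying in an edge $\widetilde e_0$ of $\widetilde\Phi$. For a component $d$ of $k-\lambda$ with $r(d)=r\geq 2$, lift it to $\widetilde d\subset\widetilde k$; by definition the two boundary leaves $g_{\widetilde d}^\pm$ of $\widetilde d$ cross exactly the edges $e_{-r+1},\dots,e_0=\widetilde e_0,\dots,e_{r-1}$ of $\widetilde\Phi$ and then separate at (at least) one of the two terminal switches. Reversing the orientation of $k$ if necessary, it is enough to bound the number of such $d$ for which the separation occurs at the positive terminal switch $\widehat s_d$, the switch of $\widetilde\Phi$ at the far end of $e_{r-1}$. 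The key observations are then: (i) at any switch of $\widetilde\Phi$, among the leaves of $\widetilde\lambda$ traversing the incoming edge up to the switch, there is at most one ``innermost'' pair $(g^+,g^-)$ with $g^+$ and $g^-$ entering the two distinct outgoing edges — because ``enters the left/right outgoing edge'' partitions the incoming tie into two subintervals — so any gap straddling the switch has exactly this pair as its boundary leaves; and (ii) pushing forward along a fixed edge path of $\widetilde\Phi$ carries gaps injectively, since a leaf lying between two gaps that both traverse the whole path also traverses it and hence separates their images on the terminal tie. Combining (i) and (ii), the map $d\mapsto$ (common edge path $\widetilde e_0,e_1(d),\dots,e_{r-1}(d)$) is injective on the set of gaps separating at the positive end at step $r$.

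It remains to bound, uniformly in $r$, the number of edge paths of length $r-1$ issuing from the fixed edge $\widetilde e_0$ that actually arise from such a gap; this is the main obstacle, and it is where the finiteness of $\Phi$ enters decisively rather than formally. Each such gap records a branching of the recursive partition of the gaps of $k-\lambda$ obtained by pushing forward through successive switches, and the point is that only boundedly many branchings of this partition can ``carry'' a gap at any fixed depth — intuitively, the deep gaps of $k-\lambda$ accumulate only in the finitely many spikes of the complement $S-\lambda$, and in each spike there are boundedly many gaps at each depth. Making this precise is exactly the bookkeeping carried out in \cite[Lemma 4]{Bon1}, and I would simply follow that argument to obtain the constant $C$, depending only on the topology of $S$.
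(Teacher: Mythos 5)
The paper does not actually prove this lemma: it is quoted from \cite[Lemma 4]{Bon1} and stated without proof, so the only thing to compare your attempt against is that citation --- which is also where your own argument ends up. Your preliminary reductions are correct: at a switch of $\widetilde\Phi$ the leaves crossing the incoming tie split into two closed subintervals according to which outgoing edge they enter, so exactly one gap of that tie has its two boundary leaves diverging there, and pulling this gap back along the common edge path matches it with a unique gap of $\widetilde k$. But the step you then defer to \cite{Bon1} is not residual bookkeeping; it is the entire content of the statement. Your injection lands in the set of length-$r$ edge paths issuing from $\widetilde e_0$, a set that grows like $2^{r}$, so no bound uniform in $r$ can come out of the combinatorics of $\Phi$ alone; the finite set that does the work is the set of spikes of $S-\lambda$, not the set of itineraries in $\Phi$.

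The spike argument you gesture at in your last paragraph closes the gap in a few lines, so it is worth recording rather than outsourcing. Since $\lambda$ is maximal, a component $d$ of $k-\lambda$ not containing an endpoint of $k$ lies in an ideal triangle, and its two boundary leaves are two distinct sides of that triangle (a leaf of $\widetilde\lambda$ crosses each tie of $\widetilde\Phi$ at most once), hence are asymptotic at exactly one spike $v$. The divergence radius is the minimum of the two one-sided separation times of these leaves, and the separation time in the direction of $v$ is infinite, so $r(d)$ equals the combinatorial distance from $e_0$ to the switch where the two sides of the horn of $v$ first separate (its ``mouth''). Projecting to $S$, the spike $v$ is one of the $12g-12$ spikes of $S-\lambda$, and the successive crossings of the edge containing $k$ by the horn of $v$ occur at pairwise distinct distances from its mouth, so each spike accounts for at most one component $d$ at each depth $r$. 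Hence $C=12g-12+2$ works, the $+2$ absorbing the endpoint components at $r=1$, and this depends only on the topology of $S$.
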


\begin{lem}\cite[Lemma 5]{Bon1}\label{expdecaygaplength}
There exist constants $A$, $B > 0$ such that
\[\ell_m(d)\leq Be^{-Ar(d)},\]
for every component $d$ of $k-\lambda$ with depth $r(d)$ and length $\ell_m(d)$. \qed
\end{lem}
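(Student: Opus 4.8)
\emph{Proof plan.} The plan is to deduce the estimate from the exponential convergence of geodesics that fellow‑travel in negative curvature. Since $S$ is closed, fix an upper curvature bound $\mathrm{sec}_m\le -\kappa^2<0$, so that the universal cover $\widetilde S$ is a Hadamard surface satisfying the $\mathrm{CAT}(-\kappa^2)$ inequality; lift $\Phi$, $\lambda$ to $\widetilde\Phi$, $\widetilde\lambda$. Because $\Phi$ has only finitely many edges, each a compact rectangle foliated by its ties, there are constants $D_0>0$ and $\varepsilon_0>0$, depending only on $\Phi$ and $m$, such that every edge has $m$‑diameter at most $D_0$ and every subarc of a leaf of $\widetilde\lambda$ crossing an edge transversally to its ties (hence from one short side to the other) has $m$‑length at least $\varepsilon_0$. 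We may also assume, as is standard, that the ties of $\Phi$ are $m$‑geodesic, so that $\ell_m(d)$ equals the $\widetilde S$‑distance between the endpoints of the lifted gap, and that the angle between any leaf of $\lambda$ and any tie of $\Phi$ at an intersection point is bounded below by some $\theta_0>0$; this last bound is automatic from the compactness of $\lambda$ and the transversality of the ties.

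Now fix a component $d$ of $k-\lambda$ and set $r=r(d)$; the $d$ with small $r$ are covered by the trivial bound $\ell_m(d)\le D_0$ after enlarging $B$, so I assume $r$ large. Lift $d$ to $\widetilde d$, contained in a tie of an edge $\widetilde e_0$ of $\widetilde\Phi$, and let $g^+,g^-\subset\widetilde\lambda$ be the leaves through the two endpoints $p^+,p^-$ of $\widetilde d$. By the definition of the depth, $g^+$ and $g^-$ cross in turn the same edges $\widetilde e_{-r+1},\dots,\widetilde e_0,\dots,\widetilde e_{r-1}$ of $\widetilde\Phi$. Let $\gamma^\pm\subset g^\pm$ be the subarc running from a point of $g^\pm\cap\widetilde e_{-r+1}$ to a point of $g^\pm\cap\widetilde e_{r-1}$; since $g^\pm$ is a geodesic, $\gamma^\pm$ is the geodesic segment between its endpoints, and since $\widetilde e_{-r+1}$ and $\widetilde e_{r-1}$ have diameter at most $D_0$, the initial endpoints of $\gamma^+$ and $\gamma^-$ are within $D_0$ of each other, and likewise the terminal endpoints. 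Moreover $p^\pm$ lies on $\gamma^\pm$ at arc‑length at least $\varepsilon_0(r-2)$ from each endpoint of $\gamma^\pm$, because between $\widetilde e_0$ and either end there remain $r-2$ further full edge‑crossings, each of length at least $\varepsilon_0$. (It is harmless that the edges $\widetilde e_i$ need not be distinct lifts.)

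The core of the argument is then a standard $\mathrm{CAT}(-\kappa^2)$ estimate: convexity of the distance between two geodesics, sharpened in strictly negative curvature to exponential decay, yields constants $C_1>0$, $A_0>0$ depending only on $\kappa$ and $D_0$ such that, for two geodesic segments with corresponding endpoints at distance at most $D_0$, any point of one lying at arc‑length at least $\ell_1$ from both its own endpoints is within distance $C_1e^{-A_0\ell_1}$ of the other. Applied to $\gamma^+$, $\gamma^-$ with $\ell_1=\varepsilon_0(r-2)$, this gives $d(p^+,g^-)\le C_1e^{-A_0\varepsilon_0(r-2)}$. Finally, $p^-$ is the point where the geodesic leaf $g^-$ meets the geodesic tie through $p^+$, at an angle at least $\theta_0$; hyperbolic trigonometry in the right triangle with vertices $p^+$, its nearest‑point projection onto $g^-$, and $p^-$ bounds $d(p^+,p^-)$ by a fixed multiple (depending only on $\theta_0$) of $d(p^+,g^-)$. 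Hence $\ell_m(d)=d(p^+,p^-)\le Be^{-Ar}$ with $A=A_0\varepsilon_0$, after adjusting $B$.

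I expect the two genuinely delicate points to be the exponential‑decay comparison estimate with \emph{uniform} constants, and the last passage from the distance between the two bounding leaves to the length of the tie‑subarc between them, which is where the geometric normalization of $\Phi$ relative to $m$ (geodesic ties, a lower bound $\theta_0$ on crossing angles) really enters; both steps are classical but require that set‑up to be pinned down, and this is presumably why the paper defers to \cite{Bon1}. Together with Lemma~\ref{boundednumberofgaps}, the estimate yields $\sum_d\ell_m(d)<\infty$, the finiteness that the limiting argument from Proposition~\ref{ABGform} will need.
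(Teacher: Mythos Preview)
Your argument is correct and is precisely the standard negative-curvature mechanism underlying the cited result: two leaves of $\widetilde\lambda$ that share the same length-$(2r-1)$ edge itinerary in $\widetilde\Phi$ fellow-travel for time of order $r$, and the $\mathrm{CAT}(-\kappa^2)$ convexity/comparison then forces their separation at the midpoint to decay like $e^{-A r}$; the final angle bound converts this to the length of the tie subarc. The paper itself gives no proof here---the statement is quoted with a \qed and attributed to \cite[Lemma~5]{Bon1}---so there is no alternative approach to compare against; what you have written is essentially a sketch of the proof in \cite{Bon1}, and your closing paragraph correctly identifies the two places (uniform exponential comparison, and the tie-geometry normalization) where the details need to be filled in.
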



We fix an arbitrary norm $\Vert \ \Vert$ on $\R^{n-1}$. Proposition~\ref{prop:transversecocyclesweightinR(n-1)} associates to each twisted transverse cocycle $\alpha \in \twis$ an edge weight system $a_\alpha \in \mathcal W(\widehat\Phi; \R^{n-1})$ for the orientation cover of the train track $\Phi$. Set 
$$
\Vert \alpha \Vert_{\widehat{\Phi}}=\max_e \Vert a_\alpha(e) \Vert
$$
where the maximum is taken over the edges $e$  of $\widehat{\Phi}$.

\begin{lem}\cite[Lemma 6]{Bon1}\label{lineargrowthgapheight}
Choose an orientation for a generic tie $k$ of the train track $\Phi$. For every component $d$ of $k-\lambda$, let $k_d$ be the oriented subarc of $k$ that joins the negative end point of $k$ to an arbitrary point of $d$. Then
$$\Vert \alpha (k_d) \Vert \leq r(d) \Vert \alpha \Vert_{\widehat{\Phi}}$$
for every component $d$ of $k-\lambda$ and for every twisted transverse cocycle $\alpha \in \twis$. \qed
\end{lem}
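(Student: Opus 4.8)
Since the claim is an inequality for the norm of the single vector $\alpha(k_d)\in\R^{n-1}$, the plan is to exhibit $\alpha(k_d)$ as a sum of at most $r(d)$ edge weights of $\widehat\Phi$ and then apply the triangle inequality. Concretely, I would show that the oriented arc $k_d$ is homotopic rel $\lambda$ to a concatenation $\ell_1*\cdots*\ell_{N}$ of oriented arcs with $N\le r(d)$, each $\ell_i$ being in turn homotopic rel $\lambda$ to a tie of $\Phi$ whose orientation lifts it to a tie of $\widehat\Phi$ lying in some edge $e_i$. Granting this, finite additivity and homotopy invariance (conditions (1) and (2) in the definition of a twisted transverse cocycle), together with Proposition~\ref{prop:transversecocyclesweightinR(n-1)}, give $\alpha(k_d)=\sum_{i=1}^N\alpha(\ell_i)=\sum_{i=1}^N a_\alpha(e_i)$, hence $\Vert\alpha(k_d)\Vert\le\sum_{i=1}^N\Vert a_\alpha(e_i)\Vert\le r(d)\,\Vert\alpha\Vert_{\widehat\Phi}$. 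Note that condition (3), the orientation-reversing involution $x\mapsto\overline x$, causes no trouble, precisely because $\Vert\alpha\Vert_{\widehat\Phi}=\max_e\Vert a_\alpha(e)\Vert$ is taken over \emph{all} edges of $\widehat\Phi$, hence over both $e_i$ and $\iota(e_i)$.

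To build the decomposition I would work in the universal cover and induct on the depth $r(d)$. Lift $k$, $k_d$, $d$ to $\widetilde k$, $\widetilde{k_d}$, $\widetilde d$ inside the edge $\widetilde e_0$ of $\widetilde\Phi$ containing $\widetilde d$, and let $g^{\pm}=g_{\widetilde d}^{\pm}$ be the two leaves of $\widetilde\lambda$ through the endpoints of $\widetilde d$; by the definition of the depth they run through the same block $\widetilde e_{-r+1},\dots,\widetilde e_0,\dots,\widetilde e_{r-1}$ of edges of $\widetilde\Phi$, $r=r(d)$, and diverge immediately afterwards. If $r(d)=1$, then $d$ contains an endpoint of the tie $k$, and $k_d$ is homotopic rel $\lambda$ either to a constant arc (negative endpoint, so $\alpha(k_d)=0$) or to the whole tie $k$ (positive endpoint, so $\alpha(k_d)=a_\alpha(e)$ for one edge of $\widehat\Phi$); in both cases $N\le1$. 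If $r(d)=r\ge2$, the idea is to perform a single elementary move that trades one unit of depth for exactly one tie: using the fact that $g^{-}$ and $g^{+}$ fellow-travel, I would homotope $\widetilde{k_d}$ rel $\widetilde\lambda$, fixing its negative endpoint and keeping its positive endpoint in $\widetilde d$, so as to push it across the last shared edge and factor it as $\widetilde{k_{d'}}*\ell$, where $\ell$ lies in a single tie of $\widetilde\Phi$ and $d'$ is a component of $k-\lambda$ with $r(d')=r-1$. Then $\alpha(k_d)=\alpha(k_{d'})+a_\alpha(e_\ell)$, and the inductive hypothesis bounds $\alpha(k_{d'})$ by $r-1$ edge weights; so $N\le r$.

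The main obstacle is precisely this elementary move: verifying that the arc $\widetilde{k_d}$ can always be pushed across the last edge shared by $g^{-}$ and $g^{+}$ so as to produce a genuine gap $d'$ of depth exactly one less and a single intervening tie. This is a local combinatorial analysis of the train track $\widetilde\Phi$ inside the strip traversed by the two leaves, and it is exactly the computation carried out in Bonahon \cite[proof of Lemma 6]{Bon1}; the passage from $\R$-valued to twisted $\R^{n-1}$-valued transverse cocycles changes nothing, since every step is $\R^{n-1}$-linear and the orientation bookkeeping has already been absorbed into the definition of $\Vert\,\cdot\,\Vert_{\widehat\Phi}$. Finally, for a fixed $d$ the depth $r(d)$ is finite and the decomposition has at most $r(d)$ terms, so --- unlike the infinite sums in the Gap Formula of Lemma~\ref{gap formula}, whose convergence is governed by Lemmas~\ref{boundednumberofgaps} and \ref{expdecaygaplength} --- no convergence question arises here.
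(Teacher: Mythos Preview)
The paper supplies no proof here: the lemma is quoted from \cite[Lemma~6]{Bon1} and closed with a \qed. Your sketch is an outline of Bonahon's argument from that reference, and you are right that the passage from $\R$-valued to twisted $\R^{n-1}$-valued cocycles is purely formal, so your approach and the paper's coincide.

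Two minor slips are worth flagging. First, your base case reads the convention backwards: the paper stipulates that $r(d)=1$ \emph{when} $d$ contains an endpoint of $k$, not that $r(d)=1$ \emph{only} then; an internal gap whose bounding leaves $g_d^{\pm}$ separate at the very next switch on one side of $e_0$ also has depth~$1$, and that case still needs to be handled. Second, in your inductive step the gap $d'$ produced by sliding across one edge naturally lives on an \emph{adjacent} tie $k'$ rather than on the original tie $k$, so the induction is really over pairs (tie, gap) rather than over gaps of the fixed tie $k$. Neither point undermines the strategy, and both are absorbed by the reference to \cite[proof of Lemma~6]{Bon1} that you already invoke for the combinatorial details.
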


\subsection{Proof of Theorems~\ref{thm:MainThm1} and \ref{thm:MainThm2}} 
\label{sect:Proof main thms}

 We are now ready to prove Theorem~\ref{thm:MainThm1}. For convenience, we repeat this statement as Theorem~\ref{maintheorem1} below.

By Proposition~\ref{prop:transversecocyclesweightinR(n-1)} and Lemma~\ref{lem:TransverseCocycleDefinesHomology}, a twisted transverse cocycle $\alpha \in \twis$ defines a homology class $[\alpha] \in H_1(\widehat \Phi; \R^{n-1})= H_1(\widehat \Phi; \R)^{n-1}$. For $a=1$, $2$, \dots, $n-1$, let $[\alpha^{(a)}] \in H_1(\widehat \Phi; \R)$ denote the $a$-th component of $[\alpha]$.

\begin{thm}
 \label{maintheorem1}
 Let S be a closed oriented surface with genus $g\geq 2$,  let $\lambda$ be a maximal geodesic lamination on $S$, and let $\Phi$ be a train track carrying $\lambda$. Then, for every two twisted transverse cocycles
  $\alpha_1$, $\alpha_2 \in \twis$, the Atiyah-Bott-Goldman pairing $\omega(U_{\alpha_1}, U_{\alpha_2})$ of the corresponding infinitesimal shearing vectors $U_{\alpha_1}$, $U_{\alpha_2} \in T_\rho \Hit_n(S)$ is equal to 
 $$
 \omega(U_{\alpha_1}, U_{\alpha_2})  = \sum_{a,b=1}^{n-1} C(a,b)\, [\alpha_1^{(a)}] \cdot  [\alpha_2^{(b)}],
 $$
where the homology classes $[\alpha_1^{(a)}]$, $[\alpha_2^{(b)}] \in H_1(\widehat \Phi; \R)$ are defined as above, where $[\alpha_1^{(a)}] \cdot  [\alpha_2^{(b)}]$ denotes their algebraic intersection number in the oriented surface $\widehat\Phi$, and where
$$
C(a,b)=
\begin{cases}
  2a(n-b) &\text{ if } \, a\leq b\\
  2b(n-a) &\text{ if } \, a\geq b .
\end{cases}
$$
\end{thm}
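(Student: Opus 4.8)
The plan is to start from the formula of Proposition~\ref{ABGform}, which already expresses $\omega(U_{\alpha_1},U_{\alpha_2})$ as a sum over the switches $s$ of $\widehat\Phi$ of Killing-form pairings $B\big(u_{\alpha_i}(e^{\mathrm{right}}_s),u_{\alpha_j}(e^{\mathrm{left}}_s)\big)$ and similar terms. The first step is to understand the element $u_{\alpha}(\widetilde k)\in\mathfrak{sl}_n(\mathbb{R})$ produced by the Gap Formula (Lemma~\ref{gap formula}) when $\widetilde k$ is (a lift of) the arc $k^{\mathrm{right}}_s$ or $k^{\mathrm{left}}_s$ at a switch $s$. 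Since these arcs are short — they stay inside the train track — the leaves $g^{\pm}_d$ appearing in the Gap Formula for $k^{\mathrm{left}}_s$ and for $k^{\mathrm{right}}_s$ are ``mostly the same'': the two arcs emanate from the same switch and the components $d$ of $k^{\mathrm{right}}_s-\widetilde\lambda$ and $k^{\mathrm{left}}_s-\widetilde\lambda$ of large depth pair up. This is exactly the situation where Lemmas~\ref{boundednumberofgaps}, \ref{expdecaygaplength} and \ref{lineargrowthgapheight} are used: a limiting/telescoping argument shows that the difference $u_\alpha(\widetilde k^{\mathrm{right}}_s)-u_\alpha(\widetilde k^{\mathrm{left}}_s)$, or more precisely the contributions that survive in the antisymmetrized sum over switches, reduces to a finite sum governed by the leaves of $\widetilde\lambda$ that actually separate the two outgoing edges of $\widehat\Phi$ at $s$. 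The upshot of this first reduction should be an expression of $\omega(U_{\alpha_1},U_{\alpha_2})$ purely in terms of the edge weights $\alpha_i(k^{\mathrm{left}}_s)$, $\alpha_i(k^{\mathrm{right}}_s)$ and Killing-form pairings of the infinitesimal shearing matrices $t^{(a)}_g$ along the relevant leaves.

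The second step is the linear-algebra heart of the computation: for an oriented geodesic $g$ with associated transverse flags $E,F$ and line decomposition $\mathbb{R}^n=\bigoplus_b L_b$, the matrix $t^{(a)}_g$ is diagonal in this decomposition, acting by $\frac{n-a}{n}$ on $L_b$ for $b\le a$ and by $-\frac an$ for $b\ge a+1$. One computes
$$
\mathrm{Tr}\big(t^{(a)}_g t^{(b)}_g\big) = a\Big(\tfrac{n-a}{n}\Big)\Big(\tfrac{n-b}{n}\Big) + (n-b)\Big(-\tfrac an\Big)\Big(-\tfrac bn\Big) + \text{(middle terms for } a< b\text{)},
$$
and after simplification this yields $\mathrm{Tr}\big(t^{(a)}_g t^{(b)}_g\big)=\frac1n\min(a,b)\,(n-\max(a,b))$ (for the two leaves $g^-,g^+$ bounding the same complementary triangle one also needs the cross terms, which vanish or combine thanks to transversality of the flags, cf.\ the Gap Formula structure). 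Multiplying by $B(u,v)=2n\,\mathrm{Tr}(uv)$ produces exactly the constant $C(a,b)=2\min(a,b)(n-\max(a,b))=2a(n-b)$ for $a\le b$ and $2b(n-a)$ for $a\ge b$. This identifies the coefficient $C(a,b)$ in the statement; the key point to check carefully here is that only the ``diagonal'' pairings $t^{(a)}_g$ with $t^{(b)}_g$ along the \emph{same} leaf contribute after the limiting process, while pairings along distinct leaves cancel — this cancellation is where the precise geometry of the shearing (the $T_{g^-}$ versus $T_{g^+}$ bookkeeping) enters.

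The third step assembles everything: after the first two steps, $\omega(U_{\alpha_1},U_{\alpha_2})$ becomes $\sum_{a,b}C(a,b)$ times a sum over switches $s$ of $\frac12\big(\alpha_1^{(a)}(e^{\mathrm{right}}_s)\alpha_2^{(b)}(e^{\mathrm{left}}_s)-\alpha_1^{(a)}(e^{\mathrm{left}}_s)\alpha_2^{(b)}(e^{\mathrm{right}}_s)\big)$, which is precisely the expression appearing in Theorem~\ref{thm:MainThm2}. Then Lemma~\ref{lem:ComputeIntersectionEdgeWeightSystems}, applied to the edge weight systems $a_{\alpha_1^{(a)}}$ and $a_{\alpha_2^{(b)}}\in\mathcal W(\widehat\Phi;\R)$, identifies this inner sum with the algebraic intersection number $[\alpha_1^{(a)}]\cdot[\alpha_2^{(b)}]$ in $H_1(\widehat\Phi;\R)$, giving the stated formula. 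I expect the main obstacle to be the first step — controlling the limiting process that turns the infinite sums in the Gap Formula, evaluated on the short arcs $k^{\mathrm{left}}_s$ and $k^{\mathrm{right}}_s$, into the clean finite switch-by-switch formula. This requires showing absolute convergence via the exponential decay of gap lengths (Lemma~\ref{expdecaygaplength}), the bounded count of gaps at each depth (Lemma~\ref{boundednumberofgaps}), and the linear growth of $\alpha(k_d)$ (Lemma~\ref{lineargrowthgapheight}), and then carefully tracking which terms cancel in the antisymmetrized sum over all switches of $\widehat\Phi$ — this is the technical core, closely paralleling the $n=2$ argument of S\"ozen--Bonahon \cite{SozBon} but with the extra bookkeeping of the indices $a,b$ and the matrices $t^{(a)}_g$.
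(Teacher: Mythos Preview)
Your overall architecture is right, and your second and third steps match the paper: the computation $B(t_g^{(a)},t_g^{(b)})=C(a,b)$ is exactly Lemma~\ref{KillingForm}, and the final identification with the intersection number via Lemma~\ref{lem:ComputeIntersectionEdgeWeightSystems} is precisely how the paper closes the argument.

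The gap is in your first step. You propose to work at the \emph{fixed} train track $\Phi$ and extract the answer by finding cancellations in the antisymmetrized sum over switches, or by some telescoping between $u_\alpha(\widetilde k_s^{\mathrm{right}})$ and $u_\alpha(\widetilde k_s^{\mathrm{left}})$. But the quantity $B\big(u_{\alpha_1}(e_s^{\mathrm{right}}),u_{\alpha_2}(e_s^{\mathrm{left}})\big)-B\big(u_{\alpha_1}(e_s^{\mathrm{left}}),u_{\alpha_2}(e_s^{\mathrm{right}})\big)$ is not a difference of the form $u_\alpha(\widetilde k_s^{\mathrm{right}})-u_\alpha(\widetilde k_s^{\mathrm{left}})$, and the correction terms in the Gap Formula depend on $\rho$ through the flag curve $\mathcal F_\rho$; there is no evident combinatorial mechanism that makes them cancel across the switches of a fixed $\Phi$. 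Your remark that ``gaps of large depth pair up'' between $k_s^{\mathrm{left}}$ and $k_s^{\mathrm{right}}$ is also not correct as stated: these two arcs are disjoint halves of the switch tie and their gaps are distinct.

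The paper's mechanism is different and is the missing idea. One constructs a nested family of train tracks $\Phi=\Phi_0\supset\Phi_1\supset\cdots\supset\Phi_R\supset\cdots$, all carrying $\lambda$, obtained by ``unzipping'' along the components of $e-\lambda$ of depth $\leq R$. Both the formula of Proposition~\ref{ABGform} and the intersection-number expression from Lemma~\ref{lem:ComputeIntersectionEdgeWeightSystems} hold for every $\Phi_R$, and each has a bounded number of switches (independent of $R$). On $\Phi_R$ every tie has only gaps of depth $\geq R$, so Lemmas~\ref{boundednumberofgaps}--\ref{lineargrowthgapheight} give the pointwise approximation $u_\alpha(\widetilde k)=\sum_a\alpha^{(a)}(k)t_g^{(a)}+O(\Vert\alpha\Vert_{\widehat\Phi}e^{-AR})$ for each tie $k$ of $\widehat\Phi_R$ (this is Lemma~\ref{gapapp}), while the edge weights themselves are only $O(R)$ (Lemma~\ref{weightoftransversecocycle}). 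Plugging this into Proposition~\ref{ABGform} shows that each switch contribution to $\omega(U_{\alpha_1},U_{\alpha_2})$ differs from the corresponding switch contribution to $\sum_{a,b}C(a,b)[\alpha_1^{(a)}]\cdot[\alpha_2^{(b)}]$ by $O(R^2e^{-AR})$; letting $R\to\infty$ finishes. The limit is taken over train tracks, not over terms within a single train track --- that is the point you are missing.
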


\begin{proof}
We will use the overall strategy of \cite{SozBon}, by `unzipping zippers' (see \cite[section 1.7,2.4]{PeH}) to construct a family of nested train tracks $\Phi_R$ that carry the lamination $\lambda$ and are thinner and thinner. We need to do this in a controlled way in order to estimate the growth of the weights defined on the edges of $\widehat \Phi_R$ by $\alpha_1$ and $\alpha_2$. 

We begin with the train track  $\Phi= \Phi_0$. 

 For each integer $R\geq 1$, delete from each edge $e$ of $\Phi_0$ the components of $e-\lambda$ with depth less than or equal to $R$ (defining the depth of a component of $e-\lambda$ as the depth of the corresponding component of $k_e-\lambda$ where $k_e$ is an arbitrary tie of $e$). The union of the remaining pieces of $e-\lambda$, for all edges of $\Phi_0$, can then be slightly enlarged to a train track $\Phi_R \subset \Phi_0$ carrying $\lambda$. 
 
 To simplify the notation, write 
 $$
 \alpha_1 * \alpha_2 =  \sum_{a,b=1}^{n-1} C(a,b)\, [\alpha_1^{(a)}] \cdot  [\alpha_2^{(b)}]
 $$
for the quantify that we want to show is equal to $ \omega(U_{\alpha_1}, U_{\alpha_2})  $. Lemma~\ref{lem:ComputeIntersectionEdgeWeightSystems} expresses this as a sum
$$
 \alpha_1 * \alpha_2 = \frac12 \sum_s \sum_{a,b=1}^{n-1}  C(a,b) \big( \alpha_1^{(a)}(e_s^{\mathrm{right}})\alpha_2^{(b)}(e_s^{\mathrm{left}}) - \alpha_1^{(a)}(e_s^{\mathrm{left}})\alpha_2^{(b)}(e_s^{\mathrm{right}}) \big)
$$
where the sum is over all switches $s$ of $\widehat\Phi_R$, and $e_s^{\mathrm{left}}$ and $e_s^{\mathrm{right}}$ are the edges of $\widehat\Phi_R$ that are adjacent to the same side of $s$ and respectively diverge to the left and the right as seen from $s$. 

Proposition~\ref{ABGform} provides another formula
\[
 \omega(U_{\alpha_1}, U_{\alpha_2})  = 
{\frac{1}{2}}\sum_s
\Big(
B \big( u_{\alpha_1}( e^{\mathrm{right}}_s), u_{\alpha_2}( e^{\mathrm{left}}_s) \big)
-B \big( u_{\alpha_1}( e^{\mathrm{left}}_s), u_{\alpha_2}( e^{\mathrm{right}}_s) \big)
\Big)
\]
expressing $ \omega(U_{\alpha_1}, U_{\alpha_2})  $ as a sum over the switches of $\widehat\Phi_R$.

Our strategy will be to  show that, for $R$ sufficiently large,  the contribution $\sum_{a,b=1}^{n-1}  C(a,b) \,\alpha_1^{(a)}(e_s^{\mathrm{right}})\,\alpha_2^{(b)}(e_s^{\mathrm{left}}) $ of each switch of $\widehat\Phi_R$  to the first sum is arbitrarily close to its contribution $B \big( u_{\alpha_1}( e^{\mathrm{right}}_s), u_{\alpha_2}( e^{\mathrm{left}}_s) \big)$ to the second sum.  

We will use the standard terminology that  $X$ is an $O(Y)$ if there exist a constant $C$ such that $|X|\leq C|Y|$.  The constants will depend on the initial train track $\Phi=\Phi_0$ and on the negatively curved metric $m$ chosen on $S$ but not on $R$. 

Recall that we arbitrarily chose a norm $\Vert \ \Vert$ on $\R^{n-1}$ and that for a twisted transverse cocycle $\alpha \in \twis$, we introduced
$$
\Vert \alpha \Vert_{\widehat{\Phi}}=\max_e \Vert \alpha(e) \Vert
$$
where the maximum is taken over the edges $e$  of $\widehat{\Phi}$. 
 
\begin{lem}
\label{weightoftransversecocycle}
For every edge $e$ of $\widehat\Phi_R$ and for every twisted transverse cocycle $\alpha \in \twis$, the edge weight $\alpha(e)\in \R^{n-1}$ is an $O(\Vert\alpha\Vert_{\widehat{\Phi}} R)$.
\end{lem}
\begin{proof} By construction, $\Phi_R$ is contained in $\Phi_0=\Phi$, and we can arrange that its orientation cover $\widehat \Phi_R$ is just the preimage of $\Phi_R$ in $\widehat\Phi$. 

If $k$ is a generic tie of $\widehat\Phi$ meeting $e$, then $k \cap e$ is a union of ties of $e$. Let $k_e$ be one of these components of $k\cap e$,  and let $d_e^+$ and $d_e^-$ be the components of $k-\lambda$ that contain the positive and negative end points of $k_e$. Then,
\[\alpha(e)=\alpha(k_e)=\alpha(k_{d_e^+})-\alpha(k_{d_e^-})\]  
by finite additivity of $\alpha$.

By construction of $\Phi_R$, the depths
$r(d_e^+)$, $r(d_e^-)$ are both bounded by $R$.  
The result then follows from Lemma  \ref{lineargrowthgapheight}.
\end{proof}

To control estimates in the universal cover $\widetilde S$, we choose a compact subset $K\subset \widetilde S$ whose projection $K \to S$ is surjective. For instance, one could take for $K$ a closed ball whose radius is larger than the diameter of $S$. 

For each geodesic $g$ of $\widetilde{S}$ and $a=1$, $2$, \dots, $n-1$,  let $t_g^{(a)} \in \mathfrak{sl}_n(\mathbb{R})$ denote the infinitesimal  $a$-shearing map along $g$, as defined in \S \ref{sect: elementary shearing}. 

\begin{lem}
\label{difference between infinitesimal shearing maps}
In the preimage $\widetilde \Phi_R\subset \widetilde S$ of the train track $\Phi_R$, let $e$ be an edge that meets  the above compact subset $K \subset \widetilde S$. 
For any two leaves $g_1$, $g_2$ of $\widetilde{\lambda}$ that cross ${e}$  and for every $a=1$, $2$, \dots, $n-1$, the difference $t_{g_1}^{(a)} -t_{g_2}^{(a)}$ is an $O(e^{-AR})$ in $\mathfrak{sl}_n(\R)$ for some constant $A>0.$ 
\end{lem}

\begin{proof} 
 Orient $g_1$ and $g_2$ so that they cross the ties of $ e$ in the same direction. Let $x_1^+$,  $x_2^+$, $x_1^-$,  $x_2^- \in \partial_\infty \widetilde S$ be their positive and negative endpoints, respectively. 

By construction of the train track $\Phi_R$, the leaves $g_1$ and $g_2$ successively cross the same sequence of edges $e_{-R}$, $e_{-R+1}$, \dots, $e_0$, \dots, $e_{R-1}$, $e_{R-1}$ of $\widetilde{\Phi}_0$ where $e_0$ meets our chosen compact subset $K \subset \widetilde S$. Indeed, $g_1$ and $g_2$ would otherwise be separated by a component of $e_0 -\widetilde\lambda$ whose depth is strictly less than $R$, contradicting the definition of $\Phi_R$. It follows that the distances $d(x_1^+, x_2^+)$  and $d(x_1^-, x_2^-)$ are both  $O(e^{-AR})$ for some constant $A>0$ depending on the (negative) curvature of the metric of $S$. (We are here using the fact that the geodesics pass at uniformly bounded distance from the compact subset $K$.)

The infinitesimal  $a$th shearing map $t_g^{(a)}$ depends differentiably on the flags $\mathcal F_\rho(x^+)$ and $\mathcal F_\rho(x^-)\in \mathrm{Flag}(\mathbb{R}^n)$ associated to the endpoints of $g$ by the flag curve  $\mathcal{F}_{\rho}\colon \partial_{\infty}\widetilde{S}\rightarrow \mathrm{Flag}(\mathbb{R}^n)$, and $\mathcal F_\rho$ is H\"older continuous with H\"older exponent $\nu>0$ by 
Proposition~\ref{prop:flag curve}. It follows that 
\begin{align*}
t_{g_1}^{(a)} -t_{g_2}^{(a)} 
&= O\Big( d \big (\mathcal F_\rho(x_1^+), \mathcal F_\rho(x_2^+)  \big) + 
d \big (\mathcal F_\rho(x_1^-), \mathcal F_\rho(x_2^-)  \big) \Big)\\
&=  O\big( d  (x_1^+, x_2^+  )^\nu + 
d (x_1^-, x_2^-)^\nu  \big) = O(e^{-\nu AR}).
\qedhere
\end{align*}
\end{proof}

\begin{lem}
\label{gapapp}
Let $k$ be an oriented tie of $\widetilde \Phi_R$ that meets the chosen compact subset $K \subset \widetilde S$. Then, for every transverse cocycle $\alpha \in \twis$,
\[
u_\alpha({k})=\sum_{a=1}^{n-1}\alpha^{(a)}(k)t_g^{(a)}+O\big(\Vert\alpha\Vert_{\widehat{\Phi}}e^{-AR} \big)
\] 
for some constant $A>0$, where $g$ is an arbitrary leaf of $\widetilde{\lambda}$ that crosses $k$ and is oriented to the left of ${k}$. 
\end{lem}

\begin{proof}
By the Gap Formula of Lemma~\ref{gap formula},
 \begin{align*}
u_\alpha({k}) - \sum_{a=1}^{n-1}\alpha^{(a)}(k)t_g^{(a)} &=
\sum_{a=1}^{n-1} \alpha^{(a)}({k}) (t_{g_{d_+}^{-}}^{(a)} -t_g^{(a)}) \\
&\qquad\qquad + \sum_{a=1}^{n-1} \sum_{d\neq d_+,d_-} \alpha^{(a)} ( {k_d}) \big( t_{g_d^{-}}^{(a)}-t_{g_d^{+}}^{(a)} \big), 
\end{align*}
with the notation of that lemma. 

By construction, the tie $k$ of $\widetilde \Phi_R$ is contained in the lift $k'$ of a tie of the initial train track $\Phi_0$. In particular, a component $d$ of $k -\widetilde\lambda$ that does not contain one of the endpoints of $k$ is also a component of  $k' -\widetilde\lambda$, and we can consider its depth $r(d)$ with respect to the train track $\Phi_0$. Note that $r(d) \geq R$ by construction of $\Phi_R$. 

For such a component $d$ of $k -\widetilde\lambda$, let $k'_d$ be a subarc of $k'$ going from the negative endpoint of $k'$ to an arbitrary point of $d$, and let $k''$ be the subarc of $k'$ going from the negative endpoint of $k'$ to the negative endpoint of $k$. Then, by Lemma~\ref{lineargrowthgapheight}, we have
\begin{eqnarray}\label{alpha(k-d)}
 \alpha^{(a)}(k_d) &=&  \alpha^{(a)}(k'_d) -  \alpha^{(a)}(k'') 
 = O\big(r(d) \Vert\alpha\Vert_{\widehat{\Phi}} \big) + O\big(R\Vert\alpha\Vert_{\widehat{\Phi}}\big)\nonumber \\
& =&O\big(r(d) \Vert\alpha\Vert_{\widehat{\Phi}}  \big).
\end{eqnarray}

From Lemma~\ref{expdecaygaplength} it follows that $\ell_m(d) = O(e^{-Ar(d)})$ for some constant $A>0$. A classical result in negatively curved geometry (see for instance \cite[Lemma~5.2.6]{CEG}) shows that because the geodesics  $g_d^+$ and  $g_d^-$ are disjoint, the distance between their endpoints in an $O(\ell_m(d))$. As in the proof of Lemma~\ref{difference between infinitesimal shearing maps}, it follows that 
\begin{equation}\label{differenceofinfiniesimaltr}
 t_{g_d^{-}}^{(a)}-t_{g_d^{+}}^{(a)} = O(e^{-\nu Ar(d)}),
\end{equation}
where $\nu$ is the H\"older exponent of the flag curve $\mathcal F_\rho$. 

Combining the equations (\ref{alpha(k-d)}) and (\ref{differenceofinfiniesimaltr}), we get
$$
 \sum_{d\neq d_+,d_-} \alpha^{(a)} ( {k_d}) \big( t_{g_d^{-}}^{(a)}-t_{g_d^{+}}^{(a)} \big)
 = O\bigg(   \sum_{d\neq d_+,d_-}  r(d) \Vert\alpha\Vert_{\widehat{\Phi}}  e^{-\nu Ar(d)} \bigg).
$$

For each component $d$ of $k-\widetilde\lambda$, we saw that $r(d) \geq R$ by construction of the train track $\Phi_R$ and that for every $r\geq R$ the number of components $d$ with $r(d)=r$ is uniformly bounded. From this it follows that 
\begin{align*}
 \sum_{d\neq d_+,d_-} \alpha^{(a)} ( {k_d}) \big( t_{g_d^{-}}^{(a)}-t_{g_d^{+}}^{(a)} \big)
 &= O\bigg(   \sum_{r=R}^\infty  r \Vert\alpha\Vert_{\widehat{\Phi}}  e^{-\nu Ar} \bigg)\\
 &= O\big(   \Vert\alpha\Vert_{\widehat{\Phi}}  e^{- A'r} \big)
\end{align*}
for any $A'< \nu A$. 

Similarly,
$$
\alpha^{(a)}({k}) (t_{g_{d_+}^{-}}^{(a)} -t_g^{(a)}) =  O\big( R  \Vert\alpha\Vert_{\widehat{\Phi}}  e^{- \nu AR} \big) =  O\big(   \Vert\alpha\Vert_{\widehat{\Phi}}  e^{- A'r} \big).
$$

As a consequence,
\begin{align*}
u_\alpha({k}) - \sum_{a=1}^{n-1}\alpha^{(a)}(k)t_g^{(a)} &=
\sum_{a=1}^{n-1} \alpha^{(a)}({k}) (t_{g_{d_+}^{-}}^{(a)} -t_g^{(a)}) \\
&\qquad\qquad + \sum_{a=1}^{n-1} \sum_{d\neq d_+,d_-} \alpha^{(a)} ( {k_d}) \big( t_{g_d^{-}}^{(a)}-t_{g_d^{+}}^{(a)} \big)\\
&= O\big(   \Vert\alpha\Vert_{\widehat{\Phi}}  e^{- A'r} \big),
\end{align*}
which proves the lemma. 
\end{proof}

The following computation explains the origin of the constants
$$
C(a,b)=
\begin{cases}
  2a(n-b) &\text{ if } \, a\leq b\\
  2b(n-a) &\text{ if } \, a\geq b 
\end{cases}
$$
that occurred in the statements of Theorems~\ref{thm:MainThm1} and \ref{thm:MainThm2}. 

\begin{lem}
\label{KillingForm}
Let $e$ be an edge of $\widetilde\Phi_R$ that meets the compact subset $K \subset \widetilde S$. 
For any two leaves $g_1$, $g_2$ of $\widetilde{\lambda}$ that cross $e$, and for every $a$, $b=1$, $2$, \dots, $n-1$, 
$$
B(t_{g_1}^{(a)},t_{g_2}^{(b)})= C(a,b) + O(e^{-AR})
$$
for some constant $A>0$.
\end{lem}
\begin{proof} Let $g$ be a geodesic of $\widetilde S$. 
The Cartan-Killing form of $\mathfrak{sl}_n(\R)$ is such that
$$B(t_g^{(a)},t_g^{(b)})=2n \mathrm{Tr}(t_g^{(a)} t_g^{(b)})$$
where $\mathrm{Tr}$ denotes the trace. Also, $t_g^{(a)}$ was defined so that, in an appropriate basis, it is represented by a diagonal matrix whose first $a$ diagonal entries are $\frac{n-a}n$, while the remaining diagonal entries are $-\frac an$

When $a\leq b$, an immediate computation then gives 
\begin{align*}
 \mathrm{Tr}({t_g}^{(a)}{t_g}^{(b)})&=  a \frac{(n-a)}n \frac{(n-b)}n - (b-a)  \frac an \frac{(n-b)}n  +(n-b) \frac an \frac bn\\
 &= \frac{a(n-b)}{n} 
\end{align*}
so that $B(t_g^{(a)},t_g^{(b)})= C(a,b) $ in this case. 

A similar computation yields $B(t_g^{(a)},t_g^{(b)})= C(a,b) $  when $a\geq b$ as well.

Then, by Lemma~\ref{difference between infinitesimal shearing maps},
\begin{align*}
B(t_{g_1}^{(a)},t_{g_2}^{(b)}) &= B(t_{g_1}^{(a)},t_{g_1}^{(b)})  + O(e^{-AR})\\
&=  C(a,b) + O(e^{-AR})
\qedhere
\end{align*}
\end{proof}

We are now ready to prove  Theorem \ref{maintheorem1}. Recall that we want to prove that 
\[
 \omega(U_{\alpha_1}, U_{\alpha_2})  = 
{\frac{1}{2}}\sum_s
\Big(
B \big( u_{\alpha_1}( e^{\mathrm{right}}_s), u_{\alpha_2}( e^{\mathrm{left}}_s) \big)
-B \big( u_{\alpha_1}( e^{\mathrm{left}}_s), u_{\alpha_2}( e^{\mathrm{right}}_s) \big)
\Big)
\]
 is equal to 
\begin{align*}
 \alpha_1 * \alpha_2  &=  \sum_{a,b=1}^{n-1} C(a,b)\, [\alpha_1^{(a)}] \cdot  [\alpha_2^{(b)}]\\
 &= \frac12 \sum_s \sum_{a,b=1}^{n-1}  C(a,b) \big( \alpha_1^{(a)}(e_s^{\mathrm{right}})\alpha_2^{(b)}(e_s^{\mathrm{left}}) - \alpha_1^{(a)}(e_s^{\mathrm{left}})\alpha_2^{(b)}(e_s^{\mathrm{right}}) \big),
\end{align*}
 where in both cases $s$ ranges over all switches of the orientation cover $\widehat \Phi_R$ of the train track $\Phi_R$. 
 
 Let us focus on the contribution of a switch $s$. Recall from Lemma~\ref{ABGform} that $B \big( u_{\alpha_1}( e^{\mathrm{right}}_s), u_{\alpha_2}( e^{\mathrm{left}}_s) \big)$ is defined by lifting $s$ to a switch $\widetilde s$ of the preimage $\widetilde\Phi_R$ of $\Phi_R$ in the universal cover $\widetilde S$. In order to apply the estimates that we just obtained we choose this lift $\widetilde s$ so its ``midpoint'', corresponding to the point $s \cap e^{\mathrm{left}}_s \cap e^{\mathrm{left}}_s$, is contained in the chosen compact subset $K \subset \widetilde S$. 
 
 The three edges $e_s^{\mathrm{in}}$, $e_s^{\mathrm{right}}$ and $e_s^{\mathrm{left}}$ of $\Phi_R$ that meet at $s$ lift to the three edges $\widetilde{e}_s^{\ \mathrm{in}}$, $\widetilde{e}_s^{\ \mathrm{right}}$ and $\widetilde{e}_s^{\ \mathrm{left}}$ of $\widetilde{\Phi}_R$ that meet at $\widetilde{s}$.
Let $g^{\mathrm{right}}$ and $g^{\mathrm{left}}$ be two leaves of $\widetilde{\lambda}$ that respectively cross the edges $\widetilde{e}_s^{\ \mathrm{right}}$ and $\widetilde{e}_s^{\ \mathrm{left}}$. The switch $s$ is canonically oriented since it is a tie of the orientation cover $\widehat\Phi_R$. Lift this orientation to an orientation of $\widetilde{s}$, and orient $g^{\mathrm{right}}$ and $g^{\mathrm{left}}$ to the left of $\widetilde{s}$.

As in the set up of Lemma~\ref{ABGform}, let $\widetilde{k}_s^{\mathrm{right}} = \widetilde s \cap \widetilde{e}_s^{\ \mathrm{right}}$ and $\widetilde{k}_s^{\mathrm{left}} = \widetilde s \cap \widetilde{e}_s^{\ \mathrm{left}}$. Note that both arcs meet the compact subset $K \subset \widetilde S$. 
By Lemma \ref{gapapp},
\begin{align*}
 u_{\alpha_1}(\widetilde k^{\mathrm{right}}_s)
 &=\sum_{a=1}^{n-1}\alpha_1^{(a)}(e^{\mathrm{right}}_s)t_{g^{\mathrm{right}}}^{(a)}+O(e^{-AR}) \\
u_{\alpha_2}(\widetilde k^{\mathrm{left}}_s)
&=\sum_{b=1}^{n-1}\alpha_2^{(b)}(e^{\mathrm{left}}_s)t_{g^{\mathrm{left}}}^{(b)}+O(e^{-AR})
\end{align*}
where we have incorporated the norms $\Vert\alpha_1\Vert_{\widehat{\Phi}}$ and $\Vert\alpha_2\Vert_{\widehat{\Phi}}$ in the constants of the symbols $O$. 

It follows that 
\begin{align*}
 B \big( u_{\alpha_1}( e^{\mathrm{right}}_s), u_{\alpha_2}( e^{\mathrm{left}}_s) \big)
 &= B \big( u_{\alpha_1}( \widetilde k^{\mathrm{right}}_s), u_{\alpha_2}( \widetilde k^{\mathrm{left}}_s) \big)\\
 &= \sum_{a,b=1}^{n-1} \alpha_1^{(a)}(e^{\mathrm{right}}_s) \alpha_2^{(b)}(e^{\mathrm{left}}_s) 
 B(t_{g^{\mathrm{right}}}^{(a)}, t_{g^{\mathrm{left}}}^{(b)})\\
 &\qquad\qquad\qquad\qquad\qquad\qquad + O(R e^{-AR})\\
 &= \sum_{a,b=1}^{n-1} \alpha_1^{(a)}(e^{\mathrm{right}}_s) \alpha_2^{(b)}(e^{\mathrm{left}}_s) 
 C(a,b)\\
 &\qquad\qquad\qquad\qquad\qquad\qquad + O(R^2 e^{-AR})
\end{align*}
by noting that the geodesics $g^{\mathrm{right}}$ and $g^{\mathrm{left}}$ both cross the edge $\widetilde{e}_s^{\ \mathrm{in}}$, applying Lemma~\ref{KillingForm}, and remembering that $\alpha_1^{(a)}(e^{\mathrm{right}}_s)$ and $ \alpha_2^{(b)}(e^{\mathrm{left}}_s)$ are both $O(R)$ by Lemma~\ref{weightoftransversecocycle}.

Similarly,
\begin{align*}
 B \big( u_{\alpha_1}( e^{\mathrm{left}}_s), u_{\alpha_2}( e^{\mathrm{right}}_s) \big)
 &= \sum_{a,b=1}^{n-1} \alpha_1^{(a)}(e^{\mathrm{left}}_s) \alpha_2^{(b)}(e^{\mathrm{right}}_s) 
 C(a,b)\\
 &\qquad\qquad\qquad\qquad\qquad\qquad + O(R^2 e^{-AR}).
\end{align*}

Comparing the expressions of $B \big( u_{\alpha_1}( e^{\mathrm{left}}_s), u_{\alpha_2}( e^{\mathrm{right}}_s) \big)$ and $\alpha_1*\alpha_2$, it follows that
$$
 B \big( u_{\alpha_1}( e^{\mathrm{left}}_s), u_{\alpha_2}( e^{\mathrm{right}}_s) \big)
 = \alpha_1*\alpha_2 + O(R^2 e^{-AR}),
$$
since the number of switches of $\Phi_R$ is constant and equal to $6|\chi(S)|$. 

The quantities $B \big( u_{\alpha_1}( e^{\mathrm{left}}_s), u_{\alpha_2}( e^{\mathrm{right}}_s) \big)$ and $\alpha_1*\alpha_2$ are independent of the number $R$. Letting $R$ tend to $\infty$, we conclude that 
\begin{align*}
 B \big( u_{\alpha_1}( e^{\mathrm{left}}_s), u_{\alpha_2}( e^{\mathrm{right}}_s) \big)
 &= \alpha_1*\alpha_2 \\
 &=  \sum_{a,b=1}^{n-1} C(a,b)\, [\alpha_1^{(a)}] \cdot  [\alpha_2^{(b)}]
\end{align*}
This concludes the proof of Theorem~\ref{maintheorem1}, and therefore of Theorem~\ref{thm:MainThm1}. 
\end{proof}

This also proves Theorem~\ref{thm:MainThm2}, since Lemma~\ref{lem:ComputeIntersectionEdgeWeightSystems} shows the equivalence of Theorems~\ref{thm:MainThm1} and \ref{thm:MainThm2}.

\end{document}